\documentclass{amsart}
\usepackage{mathrsfs}
\usepackage{stmaryrd,mathtools}
\usepackage{enumerate}
\usepackage{tikz-cd}
\usepackage[all]{xy}
\usepackage{aliascnt}
\usepackage{amscd}

\usepackage{shuffle}
\usepackage{ytableau}

\usepackage[colorlinks=true, linkcolor=blue, citecolor=blue]{hyperref}
\usepackage{fullpage}
\usepackage{verbatim}
\usepackage{amssymb}

\newtheorem{theorem}{Theorem}[section]

\newaliascnt{lemma}{theorem}
\newtheorem{lemma}[lemma]{Lemma}
\aliascntresetthe{lemma}

\newaliascnt{corollary}{theorem}
\newtheorem{corollary}[corollary]{Corollary}
\aliascntresetthe{corollary}

\newaliascnt{proposition}{theorem}
\newtheorem{proposition}[proposition]{Proposition}
\aliascntresetthe{proposition}

\newaliascnt{potato}{theorem}

\aliascntresetthe{potato}

\newaliascnt{definitionlemma}{theorem}

\aliascntresetthe{definitionlemma}

\newaliascnt{conjecture}{theorem}

\aliascntresetthe{conjecture}

\newaliascnt{question}{theorem}
\newtheorem{question}[question]{Question}
\aliascntresetthe{question}

\theoremstyle{definition}

\newaliascnt{definition}{theorem}

\aliascntresetthe{definition}

\newaliascnt{remark}{theorem}
\newtheorem{remark}[remark]{Remark}
\aliascntresetthe{remark}

\newaliascnt{example}{theorem}

\aliascntresetthe{example}

\newaliascnt{notation}{theorem}

\aliascntresetthe{notation}

\definecolor{darkblue}{rgb}{0.6,0,0.1}

\usepackage{tikz}
\usetikzlibrary{calc, shapes, backgrounds,arrows,positioning,plotmarks}
\tikzset{>=stealth',
  head/.style = {fill = white, text=black},
  plaque/.style = {draw, rectangle, minimum size = 10mm}, 
  pil/.style={->,thick},
  junct/.style = {draw,circle,inner sep=0.5pt,outer sep=0pt, fill=black}
  }

\newcommand{\ZZ}{\mathbb{Z}}
\newcommand{\QQ}{\mathbb{Q}}

\newcommand{\CC}{\mathbb{C}}

\newcommand{\bL}{\mathbb{L}}
\newcommand{\bG}{\mathbb{G}}
\newcommand{\bP}{\mathbb{P}}
\newcommand{\bA}{\mathbb{A}}

\newcommand{\cV}{\mathcal{V}}

\newcommand{\cF}{\mathcal{F}}
\newcommand{\cO}{\mathcal{O}}

\newcommand{\cH}{\mathcal{H}}
\newcommand{\cI}{\mathcal{I}}
\newcommand{\cL}{\mathcal{L}}

\newcommand{\cN}{\mathcal{N}}

\DeclareMathOperator{\Supp}{Supp}

\DeclareMathOperator{\reg}{reg}

\DeclareMathOperator{\Res}{Res}
\DeclareMathOperator{\coker}{coker}
\DeclareMathOperator{\Ind}{Ind}
\DeclareMathOperator{\pr}{pr}
\DeclareMathOperator{\Cone}{Cone}

\DeclareMathOperator{\Sing}{Sing}

\DeclareMathOperator{\Gr}{Gr}

\DeclareMathOperator{\Pic}{Pic}

\DeclareMathOperator{\Sym}{Sym}

\DeclareMathOperator{\Spec}{Spec}

\DeclareMathOperator{\id}{id}

\makeatletter
\@namedef{subjclassname@2020}{%
  \textup{2020} Mathematics Subject Classification}
\makeatother

\newif\ifhascomments \hascommentstrue
\ifhascomments
  \newcommand{\matt}[1]{{\color{red}[[\ensuremath{\spadesuit\spadesuit\spadesuit} #1]]}}
   \newcommand{\rosie}[1]{{\color{blue}[[\ensuremath{\clubsuit\clubsuit\clubsuit} #1]]}}
\else
  \newcommand{\matt}[1]{}
  \newcommand{\rosie}[1]{}
\fi

\title{$K$-regularity for finite quotient singularities}
\date{}
\author{Matthew Satriano}
\thanks{MS was partially supported by a Discovery Grant from the
  Natural Sciences and Engineering Research Council of Canada.}
\address[MS]{Department of Pure Mathematics, University
  of Waterloo, Waterloo ON N2L3G1, Canada}
\email{msatrian@uwaterloo.ca}

\author{Wanchun Shen}
\address[WS]{Department of Mathematics, Stanford University, Stanford, CA 94305, USA}
\email{wanchun@stanford.edu}

\begin{document}

\begin{abstract}
    We study $K$-regularity for complex varieties with finite quotient singularities. We prove that for such varieties, $K_1$-regularity is equivalent to smoothness whenever the singular locus is contained in an affine closed subvariety (e.g., affine varieties or those with isolated singularities). In contrast, we construct projective varieties with finite quotient singularities which are $K_m$-regular for every $m$ but are not local complete intersections. Our smoothness criterion is obtained by affirmatively answering Fogarty's 1988 question concerning differential forms on finite quotients.
\end{abstract}
\maketitle

\section{Introduction}
The interplay between homotopy invariance and regularity is a central theme in algebraic $K$-theory. For a regular scheme $X$, Quillen proved that the natural map
\[\varphi_{m,r}:K_m(X)\to K_m(X\times \mathbb{A}^r)\]
induced by the projection $X\times \mathbb{A}^r\to X$ is an isomorphism for all $m, r\ge 0$, see \cite[\S4 Theorem 3 Corollary 2 and \S7 Proposition 4.1]{Quillen1973}. A scheme $X$ is said to be \textit{$K_m$-regular} if $\varphi_{m,r}$ is an isomorphism for all $r\ge 0$. Thus, Quillen's theorem says that regular schemes are $K_m$-regular for all $m$. Vorst and Dayton-Weibel showed that $K_m$-regularity implies $K_{m-1}$-regularity \cite{Vorst1979,Dayton1980}.

This raises the natural question:~to what extent does homotopy invariance detect regularity? Vorst's Conjecture \cite{Vorst1979} asserts that if $X$ is affine of dimension $d$ and $X$ is $K_{d+1}$-regular, then it is, in fact, regular. For schemes of finite type over a field of characteristic zero, this was proved by Cortiñas-Haesemeyer-Weibel in \cite{Cortinas2007}; over perfect fields of positive characteristic, the corresponding statement was established by Kerz-Strunk-Tamme in \cite{Kerz2021}. In dimension one, Vorst's bound is sharp:~over a field of characteristic zero, a seminormal affine curve is $K_1$-regular \cite[Theorem~A]{VorstK1} and $K_2$-regularity is equivalent to regularity \cite[Theorem~3.6]{Vorst1979}.




In higher dimensions, however, recent work shows that for many classes of singularities, it suffices to check $K_m$-regularity for $m$ smaller than Vorst's bound. Haesemeyer-Weibel proved that $K_2$-regularity implies normality for affine varieties in characteristic zero \cite[Theorem~0.1]{HaesemeyerWeibel}, and that if an affine local complete intersection is $K_{p+1}$-regular, then it is regular in codimension $2p$ \cite[Theorem~0.3]{HaesemeyerWeibel}. Building on this work, the second author proved that an affine local complete intersection (lci) in characteristic zero is regular if and only if it is $K_2$-regular \cite[Theorem~E]{Shen2025}. Haesemeyer and Henderson-Walshe proved that $K_2$-regularity is equivalent to regularity for affine toric varieties \cite[Theorems~1.1 and~1.2]{Haesemeyer2026}, and for simplicial affine toric varieties, already $K_1$-regularity is equivalent to regularity \cite[Theorem~1.1]{Haesemeyer2026}.

In this paper, we investigate the extent to which homotopy invariance determines regularity for finite quotient singularities. Our first main result shows that for such varieties, $K_1$-regularity is enough to detect regularity provided the singular locus is contained in a closed affine subvariety:




\begin{theorem}\label{thm:finite-quotient-k1-regular-smooth}
Let $X$ be a complex variety with finite quotient singularities. Suppose $\Sing(X)$ is contained in an affine closed subvariety of $X$ (e.g., if $X$ is affine, or has isolated singularities). Then
\[
    X \text{ is $K_1$-regular}
    \quad\Longleftrightarrow\quad
    X \text{ is smooth}.
\]
\end{theorem}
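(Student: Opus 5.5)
The implication ``smooth $\Rightarrow$ $K_1$-regular'' is Quillen's theorem, so the plan concentrates on the converse. Assume $X$ is $K_1$-regular with finite quotient singularities. The first step is to convert $K_1$-regularity into a statement about differential forms. The standard approach, following Cortiñas--Haesemeyer--Weibel and Haesemeyer--Weibel, uses the cdh-descent fiber sequence for $K$-theory together with Goodwillie's theorem, which identifies the obstruction to homotopy invariance (the $NK$-groups) with cyclic homology. For $NK_1$, and in low degrees generally, this reduces to the failure of cdh-descent for Kähler differentials and for $\mathcal{O}$. Concretely, vanishing of $NK_0$ and $NK_1$ should force:
\begin{enumerate}[(i)]
\item $X$ is seminormal, and in fact normal; for quotient singularities normality is automatic.
\item The natural maps $\Omega^p_X \to H^0_{\mathrm{cdh}}(X,\Omega^p)$ are isomorphisms in low degrees $p$, at least on an affine open neighbourhood of the singular locus.
\item The higher cdh-cohomology of $\mathcal{O}$ in the relevant low degrees vanishes.
\end{enumerate}
The hypothesis that $\Sing(X)$ lies in an affine closed subvariety is used here. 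The failure of descent is supported on $\Sing(X)$, and global $NK$-groups can be compared with those of an affine neighbourhood (Zariski descent for $NK$), so that the relevant coherent obstruction sheaves have vanishing higher Zariski cohomology. In this way the local obstruction sheaf can be extracted from global $K_1$-regularity.

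The second step identifies the cdh sheaf of forms for quotient singularities. Étale-locally, or analytically, write $X = V/G$ with $V$ smooth and $G$ finite acting with no pseudoreflections. Then $H^0_{\mathrm{cdh}}(X,\Omega^p)$ is the sheaf of $h$-differentials. For klt singularities, and quotient singularities in particular, this equals the reflexive forms $\Omega^{[p]}_X = (\pi_*\Omega^p_V)^G$ (Huber--Jörder, Kebekus--Schnell). So $K_1$-regularity yields that $\Omega^p_X \to (\pi_*\Omega^p_V)^G$ is surjective for some specific small $p$; the goal is $p=1$, using $NK_1 \leftrightarrow HC_0$ and $HH_1$-type terms.

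The third step, which I expect to be the main obstacle, is a commutative-algebra statement. If $G$ acts linearly on $\CC^n$ without pseudoreflections and every $G$-invariant $1$-form (or $p$-form) on $\CC^n$ is a combination of $f\,dg$ with $f,g$ invariant, then $G$ is trivial. This is Fogarty's question mentioned in the abstract. My first attempt is to localize at the origin and compare degrees. Invariant forms $\sum a_i\,dx_i$ include $\sum x_i\,dx_i$-type forms built from semi-invariant combinations of minimal degree. Forms in the image of $\Omega^1_{\CC[V]^G}$ have coefficients of degree at least $d_{\min}-1$, where $d_{\min}$ is the minimal degree of a nonconstant invariant. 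I would then look for an invariant $1$-form of lower degree than allowed. Failing that, I would use the Euler vector field or contraction $\iota_E$, together with a Molien-series or Hilbert-series count: compare the Hilbert series of $(\Omega^1_V)^G$ with that of the image of $\Omega^1_{V/G}$. If the surjection held, $V/G$ would be smooth by the Jacobian criterion via a dimension count of $\mathfrak m/\mathfrak m^2$ at the origin, which Chevalley--Shephard--Todd rules out when $G$ has no pseudoreflections.

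Finally, once surjectivity is shown to fail at every singular point, the singular locus must be empty, so $X$ is smooth.
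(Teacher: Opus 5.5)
\textbf{Verdict: there is a genuine gap.} Your first two steps are essentially the paper's route.
\begin{itemize}
\item $\cN K_1$ contains the summand $\cH^0(C^1_{X/\QQ})\otimes t\QQ[t]=\coker\bigl(\Omega^1_{X/\QQ}\to\cH^0\underline{\Omega}^1_{X/\QQ}\bigr)\otimes t\QQ[t]$.
\item This sheaf is quasi-coherent and supported in the affine $Z\supseteq\Sing(X)$, so its higher cohomology vanishes and $NK_1(X)=H^0(X,\cN K_1)$.
\item $\cH^0\underline{\Omega}^1_X\simeq\Omega^{[1]}_X$ for quotient singularities.
\end{itemize}
Two small points here. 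The CHWW description is over $\QQ$, so you need a comparison with $\Omega^1_{X/\CC}$, which the paper gets from $X$ being Du Bois. Also, the hypothesis gives an affine \emph{closed} subvariety containing $\Sing(X)$, not an affine open neighbourhood.

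The gap is your third step, which is the entire content of \autoref{thm:Fogarty's question}. None of your three suggestions proves it.
\begin{itemize}
\item An invariant $1$-form with coefficients of degree below $d_{\min}-1$ need not exist. For $\mu_d$ acting by scalars on $\CC^2$, every nonzero invariant $1$-form has coefficients of degree $\geq d-1=d_{\min}-1$.
\item A Hilbert series comparison needs the Hilbert series of the image of $\Omega^1_{V/G}$, which is exactly what is unknown.
\item The closing Jacobian-criterion argument is invalid. A surjection $\mathfrak m/\mathfrak m^2=\Omega^1_{V/G}\otimes k(0)\twoheadrightarrow(\pi_*\Omega^1_V)^G\otimes k(0)$ bounds the embedding dimension from \emph{below}, by the number of generators of a torsion-free module of rank $\dim V$. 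It gives no upper bound and no conflict with Chevalley--Shephard--Todd.
\end{itemize}
In fact, applying $\iota_E$ shows this fibre map is always injective. So surjectivity only says that $df_1,\dots,df_N$ generate $(\pi_*\Omega^1_V)^G$, where the $f_i$ are minimal generators of $\CC[V]^G$. Deducing smoothness from that is literally Fogarty's question, so the step assumes what must be proved.

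What is needed is a nonzero invariant $1$-form $\eta$ of total degree $D$ (counting $dx_j$ in degree $1$) with $\iota_E\eta=0$, where every positive-degree invariant has degree $\geq D$. Then the degree-$D$ part of the image is just $d\bigl((\CC[V]^G)_D\bigr)$, and $\iota_E(df)=Df$ shows $\eta$ is not in it.

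Producing such an $\eta$ takes real group-theoretic input, which the paper supplies in stages:
\begin{itemize}
\item Using a Luna slice at a general point of a top-dimensional component of $\Sing(X)$, Jamet's argument, and Chevalley--Shephard--Todd, it reduces to a fixed-point-free linear representation. Your small but non-fixed-point-free linear actions also need this step.
\item It reduces to an irreducible representation via the direct-summand lemma and Jamet's two-dimensional theorem.
\item It uses the Zassenhaus--Vincent--Wolf classification to write every irreducible fixed-point-free representation as induced from a $2$-dimensional one or from a prime-dimensional Type~I one.
\item It shows that nonvanishing of the cokernel propagates along induction.
\item In prime dimension $p$ it takes $\eta=\sum_j\zeta^{-j}\bigl(\prod_i x_i^{s}\bigr)\,dx_j/x_j$ with $s=n/p^2$, where $n$ is the order of $B$.
\end{itemize}
This $\eta$ is invariant, satisfies $\iota_E\eta=0$, and has total degree $n/p$. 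Meanwhile $B^p$ acts by a scalar of order $n/p$, which forces every invariant to have degree divisible by $n/p$.
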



The hypothesis on the singular locus $\Sing(X)$ cannot be removed. Indeed, we give examples of projective varieties $X$ with quotient singularities that are $K_m$-regular for every $m$, yet fail to be local complete intersections.

\begin{theorem}\label{thm:projective-finite-quotient-Km-regular}
    Let $A$ be an abelian variety and $\cL\in\Pic^0(A)$ a non-torsion line bundle. Let $d\geq3$ and
    \[
        X=\bP_A(\cO_A^{\oplus2}\oplus\cL)/\mu_d,
    \]
    where $\mu_d\subset\bG_m$ acts on $\cL$ by scalar multiplication. Then $X$ has finite quotient singularities and is $K_m$-regular for all $m$, but is not a local complete intersection.
\end{theorem}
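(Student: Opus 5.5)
Here is the plan. First I describe the geometry of $X$, then prove $K$-regularity, and finally show $X$ is not lci.

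\emph{Geometry of $X$.} Let $P=\bP_A(\cO_A^{\oplus 2}\oplus\cL)$, with $\mu_d$ acting by scaling the $\cL$-coordinate. The fixed locus is the disjoint union of two sections:
\begin{itemize}
\item the $\bP^1$-bundle $S_1=\bP_A(\cO_A^{\oplus2})$, which has codimension one;
\item the section $S_2=\bP_A(\cL)\cong A$, which has codimension two.
\end{itemize}
Along $S_1$ the action is a pseudo-reflection, so the quotient stays smooth there. Along $S_2$ the local model is $A\times\bA^2/\mu_d$, with $\mu_d$ acting by the same weight on both coordinates. Hence $X$ is smooth away from the image of $S_2$. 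Along that image, $X$ is locally $A\times C_d$, where $C_d$ is the cone over the rational normal curve of degree $d$. This gives finite quotient singularities. Note that $\Sing(X)\cong A$ is projective, so the hypothesis of Theorem~\ref{thm:finite-quotient-k1-regular-smooth} fails, as expected.

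\emph{Not lci.} For $d\ge 3$ the cone $C_d$ has embedding dimension $d+1>3$. A $2$-dimensional lci with embedding dimension $d+1$ would need $d-1$ equations, and the ideal of $C_d$ needs $\binom{d}{2}>d-1$ generators. So $C_d$ is not a complete intersection. Since lci is a local property and is stable under smooth factors, $X$ is not lci.

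\emph{$K$-regularity.} I would compute $NK_m(X)=\ker\big(K_m(X\times\bA^r)\to K_m(X)\big)$ by cdh-descent, following Cortiñas--Haesemeyer--Weibel. Let $\pi\colon\tilde X\to X$ be the blow-up of the singular locus. Then $\tilde X$ is a $\bP^1$-bundle-type resolution with exceptional divisor $E\cong \bP_A(\cN)$, where $\cN$ is the appropriate normal data, essentially $\bP^1\times A$ twisted by $\cL$. The difference between $K$ and $KH$ in characteristic zero is controlled by cyclic homology (CHW). In particular $NK_m(X)$ is expressed through the failure of cdh-descent for Hodge sheaves: it vanishes for all $m$ provided the maps
\[
H^q(X,\Omega^p_X)\to H^q_{cdh}(X,\Omega^p)
\]
are isomorphisms for all $p,q$. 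This is the criterion $K$-regular $\iff$ $\Omega^p\to R\pi_*\Omega^p_{cdh}$ is a quasi-isomorphism on global cohomology, applied via the abstract blow-up square. I need to check these isomorphisms in two parts:
\begin{itemize}
\item \emph{Local part.} For quotient singularities, $a_*\Omega^p_{cdh}$ equals the reflexive forms $\Omega^{[p]}_X=(\pi'_*\Omega^p_P)^{\mu_d}$. The sheaves are therefore compared via the torsion-free quotient map $\Omega^p_X\to\Omega^{[p]}_X$, together with the higher direct images $R^i\pi_*\Omega^p_{\tilde X}$ and the corrections on $E$ and $A$.
\item \emph{Global part.} The cokernels and higher terms are supported on $\Sing(X)\cong A$. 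Locally they are graded pieces of $C_d$-data, and globally they become coherent sheaves on $A$ of the form $\bigoplus \cL^{\otimes k}\otimes(\text{trivial bundles})$ with $k\ne0$. This comes from the graded structure: the $\mu_d$-weights on $\cL$ survive as nontrivial twists. The key input is that for a non-torsion $\cL\in\Pic^0(A)$, all cohomology $H^q(A,\cL^{k})$ with $k\neq0$ vanishes. So the discrepancy sheaves, though nonzero locally, have vanishing cohomology globally, and the hypercohomology comparison is an isomorphism.
\end{itemize}

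\emph{Main obstacle.} The hard step is the bookkeeping of the discrepancy sheaves: I must show that every nonzero weight component appearing is twisted by a nonzero power of $\cL$, with no $\cL^{0}$ term. I would do this by working $\bG_m$-equivariantly on the cone, where $C_d\times A$ is the quotient by $\mu_d$ of the total space of $\cO\oplus\cO$ twisted by $\cL$. The kernel and cokernel of $\Omega^p\to\Omega^{[p]}$ for $C_d$ live in positive degrees, since the degree-$0$ part is the smooth point's contribution. Degree $j$ then globalizes to $\cL^{\otimes j}$, and I would verify that all degrees $j$ occurring are nonzero and that no degree-$0$ contribution appears from $R^i\pi_*$ on the exceptional $\bP^1$-bundle.
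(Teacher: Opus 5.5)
Your description of the geometry of $X$ and your argument that $X$ is not lci are fine. The latter is in fact more detailed than the paper's. The gap is in the $K$-theoretic criterion you reduce to. In characteristic zero, $NK_*$ is controlled by Hochschild homology \cite{CHWW}, whose Hodge pieces are the derived exterior powers $L^p_X=L\bigwedge^pL_X$ of the cotangent complex. Accordingly, for projective $X$ the criterion \cite[Theorem~7.1]{Shen2025} asks that $H^i(X,L^p_X)\to H^i(X,\underline{\Omega}^p_X)$ be isomorphisms, i.e.\ that $R\Gamma(X,C^p_X)=0$ with $C^p_X=\Cone(L^p_X\to\underline{\Omega}^p_X)$.

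Your criterion replaces $L^p_X$ by $\cH^0(L^p_X)=\Omega^p_X$. It therefore discards the sheaves $\cH^j(L^p_X)$ for $j<0$. These are supported on $\Sing(X)$ and are nonzero there: if $L^p_X\simeq\Omega^p_X$ for all $p$ near a point, Hochschild homology would vanish above the embedding dimension, which forces smoothness by the Hochschild-homology criterion. Since $\underline{\Omega}^p_X$ lives in degrees $\geq 0$, one has $\cH^j(C^p_X)\simeq\cH^{j+1}(L^p_X)$ for $j\leq -2$, so exactly these sheaves feed into $NK_m$ for large $m$. In Section~4 only $\cH^0(C^1_X)=\coker(\Omega^1_X\to\Omega^{[1]}_X)$ was needed, which is why Kähler forms sufficed there. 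Your plan tracks only the kernel and cokernel of $\Omega^p_X\to\Omega^{[p]}_X$ together with blow-up corrections. So even if carried out in full, it would not show $R\Gamma(X,C^p_X)=0$.

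Your overall strategy is the paper's: split into $\bG_m$-weights, kill nonzero weights via $H^*(A,\cL^{\otimes w})=0$, and show weight zero vanishes. It repairs the gap once you apply it to $L^p$ itself. At the vertex, take a graded simplicial polynomial resolution $P_\bullet\to R=\CC[u,v]^{\mu_d}$ with all variables of positive weight. Then $L^p_R\simeq\Omega^p_{P_\bullet}\otimes_{P_\bullet}R$ has zero weight-zero part for $p>0$ and weight-zero part $\CC$ for $p=0$, in all cohomological degrees at once. This matches the weight-zero part of $\underline{\Omega}^p$, so the weight-zero part of $C^p$ vanishes.

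Two further steps need more than you give.

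\emph{Globalization.} Passing from local graded data to $\cL^{\otimes w}$-twists must be done for the complexes $C^r_X$, which also involve the $A$-directions. The paper writes $X=T\times^{\bG_m}Y$ with $Y=\bP(1,1,d)$ and $T\to A$ the $\bG_m$-torsor of $\cL$. It then filters $C^r_X$ with graded pieces $\pr^*\Omega^q_A\otimes K^{r-q}_{X/A,\epsilon}$, where $\pr\colon X\to A$ and $R^j\pr_*K^p_{X/A,\epsilon}\simeq\bigoplus_w H^j(Y,C^p_Y)_w\otimes\cL^{\otimes w}$ (\autoref{prop:CX-over-B-resolution-dependent}).

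\emph{The exceptional $\bP^1$-bundle.} Your intended check that no weight-zero term comes from it would fail as stated. The fibrewise class in $H^1(\bP^1,\Omega^1_{\bP^1})$ gives an untwisted weight-zero summand $\cO_A$ of $R^1\pi_*\Omega^1_{\widetilde X}$ for your blow-up $\pi$. Such terms cancel against the exceptional divisor in the blow-up triangle rather than vanish. Alternatively, bypass the blow-up entirely by using $\underline{\Omega}^p_X\simeq\Omega^{[p]}_X$ for quotient singularities.
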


\begin{remark}
    See \autoref{thm:Km-regular-G-torsor} and \autoref{cor:Km-regular-cone} for more general results ensuring $K_m$-regularity for all $m$. See \cite[Theorem 4.3]{HaesemeyerWeibelLineBundles} for another example that is $K_m$-regular for all $m$ (yet not normal).
\end{remark}

Note that if we take our abelian variety $A$ to be an elliptic curve, then $X$ has a $1$-dimensional singular locus consisting of finite quotient singularities and is $K_m$-regular for all $m$, but not lci.

\vspace{1em}

We prove \autoref{thm:finite-quotient-k1-regular-smooth} by answering the following question posed by Fogarty in 1988:

\begin{question}[Fogarty \cite{FOGARTY1988}]\label{q:Fogarty}
    Let $X=Y/G$, where $Y$ is smooth and $G$ is finite. Is it true that
    \[\Omega_X^1\twoheadrightarrow (\pi_*\Omega_Y^1)^G \implies \text{$X$ is smooth}?\]
\end{question}

Fogarty proved the case where $G$ is abelian, and Jamet's work \cite{Jamet} implies the result when $\dim X \le 2$. Building on Jamet's theorem and the Zassenhaus–Vincent–Wolf classification of fixed-point-free representations \cite{Zassenhaus1935, Vincent1947,Wolf2011,Stepanov}, we prove:

\begin{theorem}\label{thm:Fogarty's question}
    Let $X$ have finite quotient singularities. Then the following are equivalent:
    \begin{enumerate}
        \item $\Omega^1_X \to \Omega^{[1]}_X$ is surjective
        \item $X$ is smooth.
    \end{enumerate}
    In particular, Fogarty's \autoref{q:Fogarty} has a positive answer.
\end{theorem}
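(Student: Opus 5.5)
We prove that surjectivity of $\Omega^1_X\to\Omega^{[1]}_X$ forces $X$ to be smooth; the converse is trivial, since on smooth $X$ the sheaf $\Omega^1_X$ is reflexive and equals $\Omega^{[1]}_X$. The question is analytically local, so we work at a point $x\in X$ and write the germ as $(\CC^n,0)/G$. Here $G\subset GL_n(\CC)$ is finite and, by Chevalley--Shephard--Todd and Prill, may be taken small, i.e.\ containing no pseudoreflections. For small $G$ we have $\Omega^{[1]}_X=(\pi_*\Omega^1_{\CC^n})^G$, and $X$ is smooth at $x$ iff $G=1$. So the claim becomes: if $G\neq 1$ is small, then the invariant $1$-forms are not all generated by differentials $df$ of invariant functions together with multiples $g\,df$. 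Passing to the associated graded (both sides are graded, and surjectivity is preserved by the $\CC^*$-action), it suffices to find a homogeneous invariant $1$-form $\omega=\sum a_i\,dz_i$ of degree $k$ that is not a sum $\sum g_j\,df_j$ with $f_j,g_j\in S^G$, $S=\CC[z_1,\dots,z_n]$.

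The first step is an inductive reduction on $n$. Suppose the surjectivity holds at $x$. Then it holds on a neighbourhood, and in particular at nearby points. The stabilizer of a point $v\neq 0$ of $\CC^n$ is $G_v$, and near the image of $v$ the germ is $(\CC^{n})/G_v\cong \CC^{d}\times (\CC^{n-d}/G_v)$, where $d=\dim(\CC^n)^{G_v}$. Surjectivity for a product with a smooth factor should pass to the factor $\CC^{n-d}/G_v$. Note that $G_v$ is still small, since a pseudoreflection of $G_v$ would be one of $G$. By induction on dimension, every $G_v$ with $v\ne0$ is trivial. So we may assume $G$ acts \emph{freely} on $\CC^n\setminus 0$, i.e.\ $G$ is a fixed-point-free group. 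These groups are classified by Zassenhaus--Vincent--Wolf.

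The second step treats the base cases. Jamet's theorem disposes of $n\le 2$, and Fogarty's result handles abelian $G$. More generally, if $G$ has a normal subgroup $H$ such that surjectivity for $G$ would descend to or lift from $H$, we reduce to $H$; fixed-point-free groups always contain cyclic subgroups of prime order, whose quotients Fogarty's computation covers. The direct approach for a fixed-point-free $G$ is a degree count. Let $m$ be the minimal degree of a nonconstant invariant; there is none of degree $1$, since $G$ has no fixed vectors. Sums $\sum g_j\,df_j$ then have coefficients of degree $\ge m-1$, except those coming from the constant part. So it suffices to exhibit a nonzero invariant $1$-form whose coefficients have degree $<m-1$, or more generally one that is not in the span in some low degree. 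Molien-type series compare $\dim(S_{k}\otimes V^*)^G$ with the dimension of the image of $\bigoplus_j S^G_{k+1-\deg f_j}\,df_j$ in degree $k$. Candidate forms come from $\operatorname{Hom}_G(V, S_{k})$, and such a form is nonzero as soon as $V$ occurs in $S_k$.

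The main obstacle is running this through the classification: for the metacyclic and the $SL_2(\mathbb{F}_3)$, $SL_2(\mathbb{F}_5)$-type families, we must verify that $V$ (or its dual) appears in some $S_k$ with $k+1$ smaller than the minimal invariant degree. The cleanest route I would try first is a uniform one. Invariant forms correspond to $G$-equivariant polynomial maps $\CC^n\to\CC^n$, and one exists in degree $k=|G|+1$ as $z\mapsto z\cdot N(z)$ for an invariant $N$ of degree $|G|$; this is too high to help. So instead I would use the structure theorem: every fixed-point-free $G$ contains a normal cyclic subgroup $C$ with $G/C$ small enough. I would build the form from $C$-eigencoordinates to get degree below the minimal $G$-invariant degree, and fall back to case-by-case Molien computations for the finitely many exceptional binary polyhedral types.
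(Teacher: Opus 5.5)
Your first step is sound. Openness of surjectivity, the $\CC^*$-action, linearization at $v\neq0$, splitting off the smooth factor $\CC^d$, and induction on $n$ do reduce you to a fixed-point-free $G$. This replaces the paper's Luna slice at a general point of a top-dimensional component of $\Sing(X)$.

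\textbf{The gap: the explicit construction.} The second step carries all the content, and you only sketch it. The concrete criterion you propose is a nonzero invariant $1$-form with coefficients of degree $<m-1$, equivalently $V$ occurring in some $S_k$ with $k+1<m$. This cannot be achieved in general. Suppose $G$ contains scalars $\mu_q\cdot\id_V$. Then invariant functions have degrees divisible by $q$, and invariant $1$-forms have coefficient degrees $\equiv -1\pmod q$. So as soon as an invariant of degree $q$ exists, every invariant $1$-form has coefficients of degree $\geq m-1$.

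This already happens for irreducible non-abelian examples. Take the $3$-dimensional Type I representation with $m=7$, $r=2$, $n=18$. Here $\pi(B^3)$ is a primitive $6$th root of unity times $\id_V$. A suitable combination of the $A$-invariant monomials $x_j^5x_{j+1}$ is $B$-invariant, so the minimal invariant degree is $6$. Yet all invariant $1$-forms have coefficients of degree $\geq 5$.

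The missing idea is to work \emph{at} the degree $D$ forced by a central scalar subgroup, not below it. If all invariant degrees are multiples of $D$, a homogeneous $\sum g_j\,df_j$ of total degree $D$ must have constant $g_j$, so it equals $df$ with $f\in\CC[V]^G$. Since $\iota_E(df)=(\deg f)\,f$ for the Euler field $E$, any nonzero invariant $\eta$ of that degree with $\iota_E\eta=0$ survives in $C_G(V)$. This is exactly \autoref{prop:typeI-prime-dim}. There $\eta=\sum_j\zeta^{-j}\bigl(\prod_i x_i^s\bigr)\,dx_j/x_j$, and the central scalar $B^p$ makes all invariant degrees multiples of $n/p$.

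\textbf{The gap: handling the classification.} Your proposed mechanisms do not work as stated. Nothing makes surjectivity for $V/G$ descend to, or lift from, $V/H$ for a subgroup $H$. Likewise, Fogarty's result for a cyclic $C\subset G$ gives $C_C(V)\neq0$, and nothing transfers this to $C_G(V)$.

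The transfer that does work goes through induced representations (\autoref{lem:induction}). If $V=\Ind_H^GW$, transport an $H$-invariant form on $W$ to each $gW$ and sum. This gives a $G$-invariant form on $V$ restricting back to it, so $C_H(W)\neq0$ implies $C_G(V)\neq0$.

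This combines with two further facts:
\begin{itemize}
\item reduction to irreducible $V$ via direct summands and Jamet's surface case;
\item the consequence of the Zassenhaus--Vincent--Wolf list that every irreducible fixed-point-free representation of dimension $\geq2$ is induced from a $2$-dimensional fixed-point-free one or a prime-dimensional Type I one (\autoref{prop:ZVW->TypeI}).
\end{itemize}
Together these reduce everything to the single computation above.

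In particular, the groups involving $T^*,O^*,I^*$ are not finitely many exceptions to be handled by Molien series. Types III--V contain infinite families $\pi_{k,l}\boxtimes\tau$ with $\pi_{k,l}$ of Type I. The paper disposes of them via $(\Ind_{K_0}^K\chi)\boxtimes\tau\simeq\Ind_{K_0\times H}^{K\times H}(\chi\boxtimes\tau)$ together with Jamet's theorem.
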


\autoref{thm:Fogarty's question} is also closely related to work of Greb and Rollenske \cite{GrebRollenske} on the cotorsion sheaf $\coker(\Omega^1_X\to\Omega^{[1]}_X)$. They studied the existence of cotorsion for mild singularities and gave broad classes of finite quotient singularities with non-zero cotorsion. \autoref{thm:Fogarty's question} therefore completes this picture for finite quotient singularities. In fact, as an immediate consequence of \autoref{thm:Fogarty's question}, we see the singular locus can be fully described as follows:

\begin{corollary}\label{cor:cotorsion}
    Let $X$ have finite quotient singularities. Then
    \[
        \Sing(X)=\Supp(\coker(\Omega^1_X\to\Omega^{[1]}_X)).
    \]
\end{corollary}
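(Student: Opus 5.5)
The corollary follows from \autoref{thm:Fogarty's question} applied locally, together with the fact that both sides of the claimed equality are closed subsets of $X$ that can be checked point by point. Write $\cF=\coker(\Omega^1_X\to\Omega^{[1]}_X)$. Since $\Omega^1_X$ and $\Omega^{[1]}_X=(\Omega^1_X)^{\vee\vee}$ are coherent, $\cF$ is coherent, so $\Supp(\cF)$ is closed. For a point $x\in X$, we have $x\notin\Supp(\cF)$ if and only if $\cF_x=0$. By Nakayama and coherence, this holds if and only if $\Omega^1_U\to\Omega^{[1]}_U$ is surjective on some open neighbourhood $U$ of $x$.

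For the inclusion $\Sing(X)\supseteq\Supp(\cF)$: on the smooth locus $X_{\reg}$, the sheaf $\Omega^1_X$ is locally free. It is therefore reflexive, so $\Omega^1\to\Omega^{[1]}$ is an isomorphism there and $\cF|_{X_{\reg}}=0$. Hence $\Supp(\cF)\subseteq\Sing(X)$.

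For the converse: suppose $x\notin\Supp(\cF)$ and choose $U$ as above. The open subvariety $U$ still has finite quotient singularities, since this is a local analytic (or étale-local) condition. Formation of $\Omega^1$ and of the reflexive hull $\Omega^{[1]}$ commutes with restriction to opens, so $\Omega^1_U\to\Omega^{[1]}_U$ is surjective. By \autoref{thm:Fogarty's question}, $U$ is smooth, and in particular $x\notin\Sing(X)$. This gives $\Sing(X)\subseteq\Supp(\cF)$.

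There is no genuine obstacle here. The only points to verify are the compatibility of reflexive hulls with restriction to open sets, and that the notion ``finite quotient singularities'' is inherited by open subsets, so that the theorem applies to $U$. Both are standard.
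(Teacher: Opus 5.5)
Your proposal is correct and matches the paper's approach. The paper states the corollary as an immediate consequence of \autoref{thm:Fogarty's question}: the cokernel vanishes on the smooth locus, and applying the theorem to the open complement of the (closed) support shows that this complement is smooth, exactly as you argue.
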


We additionally obtain the following alternative description of the singular locus. Let $\cN K_1$ denote the Zariski sheaf associated to the presheaf given by Bass' $NK$-groups $U\mapsto NK_1(U)$ (see e.g., \cite{vanderKallen86}). 
It is then an easy consequence of \autoref{thm:Fogarty's question} (see the last paragraph of the proof of \autoref{thm:finite-quotient-k1-regular-smooth}) that:

\begin{corollary}\label{cor:suppNK1}
    Let $X$ have finite quotient singularities. Then
    \[
        \Sing(X)=\Supp(\cN K_1).
    \]
\end{corollary}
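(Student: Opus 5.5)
The plan is to compare stalks of $\cN K_1$ with stalks of the cotorsion sheaf $\coker(\Omega^1_X\to\Omega^{[1]}_X)$, and then apply \autoref{cor:cotorsion}. Algebraic $K$-theory commutes with filtered colimits of rings. Hence the stalk of $\cN K_1$ at a point $x$ is
\[
(\cN K_1)_x=\varinjlim_{x\in U} NK_1(U)=NK_1(\cO_{X,x}),
\]
where $U$ runs over affine open neighbourhoods of $x$. If $x\notin\Sing(X)$, then $\cO_{X,x}$ is regular. Quillen's theorem quoted in the introduction then gives $NK_1(\cO_{X,x})=0$, so $\Supp(\cN K_1)\subseteq\Sing(X)$.

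For the reverse inclusion, I would reuse the computation that underlies the proof of \autoref{thm:finite-quotient-k1-regular-smooth} in the affine case. Let $U=\Spec R$ be an affine open with quotient singularities, over $\CC$. The cdh-descent results of Cortiñas–Haesemeyer–(Walker)–Weibel express $NK_1(R)$ through cyclic homology and cdh-cohomology of differentials. In particular, they give a natural surjection (in fact a direct summand) of the form
\[
NK_1(R)\twoheadrightarrow \coker\big(\Omega^1_R\to H^0_{cdh}(U,\Omega^1)\big)\otimes_{\QQ} t\QQ[t].
\]
This is the part of $NK_1$ that does not depend on the higher cdh cohomology.

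Next I identify the cdh differentials. For klt singularities, and in particular finite quotient singularities, $H^0_{cdh}(U,\Omega^1)$ coincides with reflexive forms $\Omega^{[1]}(U)$: this follows from the extension theorem of Kebekus–Schnell, or more directly, for quotients $Y/G$, from $h$-differentials being $(\pi_*\Omega^1_Y)^G$. These identifications are natural in $R$ and compatible with localization. Passing to the colimit over affine neighbourhoods of $x$ then shows that $NK_1(\cO_{X,x})$ surjects onto
\[
\coker(\Omega^1_X\to\Omega^{[1]}_X)_x\otimes t\QQ[t].
\]
By \autoref{cor:cotorsion}, which comes from \autoref{thm:Fogarty's question}, this cokernel is nonzero at every singular point. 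Hence $\Sing(X)\subseteq\Supp(\cN K_1)$.

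I expect the main obstacle to be the precise cdh input: producing the surjection of $NK_1(R)$ onto the cotorsion term $\otimes\, t\QQ[t]$, and identifying $H^0_{cdh}(\Omega^1)$ with $\Omega^{[1]}$, functorially enough to localize. I would take both from the proof of \autoref{thm:finite-quotient-k1-regular-smooth}, where exactly this mechanism turns $K_1$-regularity into surjectivity of $\Omega^1\to\Omega^{[1]}$. Once that mechanism is available on affine opens, the corollary is purely local and needs no hypothesis on $\Sing(X)$.
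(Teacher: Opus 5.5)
Your proposal follows the paper's argument. Both get $\Supp(\cN K_1)\subseteq\Sing(X)$ from regularity on the smooth locus; your stalkwise version via continuity of $K$-theory is a fine way to justify this. Both get the reverse inclusion from the CHWW decomposition, in which $\cN K_1$ contains $\cH^0(C^1_{X/\QQ})\otimes_\QQ t\QQ[t]$ as a direct summand. This summand is identified with $\coker(\Omega^1_X\to\Omega^{[1]}_X)\otimes_\QQ t\QQ[t]$, which is nonzero exactly on $\Sing(X)$ by \autoref{cor:cotorsion}.

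The step you blur is the base field. The CHWW formula is stated with differentials and cdh-differentials over $\QQ$. The sheaf $\cH^0\underline{\Omega}^1_{X/\QQ}$ is not $\Omega^{[1]}_X$; for instance, it contains $\Omega^1_{\CC/\QQ}\otimes\cO_X$. So your identification of cdh $1$-forms with reflexive forms (via Kebekus--Schnell or $h$-differentials) only holds after passing from $\QQ$ to $\CC$. Read either way, your displayed surjection does not come directly from CHWW.

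That passage is exactly \autoref{l:C->Q}. Finite quotient singularities are Du Bois, so $C^0_{X/\CC}\simeq 0$. Hence the term $\Omega^1_{\CC/\QQ}\otimes C^0_{X/\CC}$ in the triangle $\Omega^1_{\CC/\QQ}\otimes C^0_{X/\CC}\to C^1_{X/\QQ}\to C^1_{X/\CC}$ vanishes, and $\cH^0(C^1_{X/\QQ})\simeq\cH^0(C^1_{X/\CC})\simeq\coker(\Omega^1_X\to\Omega^{[1]}_X)$.

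You explicitly defer this mechanism to the proof of \autoref{thm:finite-quotient-k1-regular-smooth}, which includes that lemma. Once the $\QQ$-to-$\CC$ step is stated, your argument is complete, and you are right that it needs no hypothesis on $\Sing(X)$.
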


We conclude by discussing an application of \autoref{thm:Fogarty's question} to higher Du Bois singularities, a class of singularities that has attracted growing interest in recent years \cite{MOPW, Jung2022, FL24a, FL24b, MP25, SVV, CDO26}. We say that a complex variety $X$ is \emph{strict $1$-Du Bois} if it has Du Bois singularities, and the natural morphism $\Omega_X^1\longrightarrow\underline{\Omega}_X^1$ is an isomorphism, where $\underline{\Omega}_X^1$ is the Du Bois complex on $X$. For a variety with finite quotient singularities, one has $\underline{\Omega}_X^1\simeq\Omega_X^{[1]}$, see \cite[Theorem 5.3]{DB81}. Thus, \autoref{thm:Fogarty's question} implies that 

\begin{corollary}
    Let $X$ be a complex algebraic variety with finite quotient singularities. Then
    \[
    X \text{ is strict-1-Du Bois}
    \quad\Longleftrightarrow\quad
    X \text{ is smooth}.
\]
\end{corollary}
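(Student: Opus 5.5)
The statement to prove is the last corollary: strict $1$-Du Bois is equivalent to smoothness for varieties with finite quotient singularities. The plan is to reduce it directly to \autoref{thm:Fogarty's question} using the identification $\underline{\Omega}^1_X\simeq\Omega^{[1]}_X$ for finite quotient singularities \cite[Theorem 5.3]{DB81}.

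For the easy direction, suppose $X$ is smooth. Then $X$ is Du Bois, and $\underline{\Omega}^p_X\simeq\Omega^p_X$ for all $p$, since the Du Bois complex of a smooth variety is computed by its own de Rham complex. So $X$ is strict $1$-Du Bois.

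For the converse, suppose $X$ has finite quotient singularities and is strict $1$-Du Bois. The Du Bois condition holds automatically here, since quotient singularities are rational and hence Du Bois, but we do not even need this. The hypothesis gives that the natural morphism $\Omega^1_X\to\underline{\Omega}^1_X$ is an isomorphism. Composing with the identification $\underline{\Omega}^1_X\simeq\Omega^{[1]}_X$ shows that $\Omega^1_X\to\Omega^{[1]}_X$ is an isomorphism, in particular surjective. \autoref{thm:Fogarty's question} then gives that $X$ is smooth.

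The one point needing care is compatibility of maps. We must know that the isomorphism $\underline{\Omega}^1_X\simeq\Omega^{[1]}_X$ is compatible with the natural maps from $\Omega^1_X$: the composite $\Omega^1_X\to\underline{\Omega}^1_X\simeq\Omega^{[1]}_X$ should be the canonical reflexive-hull map. Both targets are torsion-free, and both maps agree with the identity on the smooth locus $X_{\reg}$, which has complement of codimension $\ge2$. Hence the two maps $\Omega^1_X\to\Omega^{[1]}_X$ agree on an open dense set, and therefore agree everywhere because the target is torsion-free. So this step is routine, and the real content lies entirely in \autoref{thm:Fogarty's question}.
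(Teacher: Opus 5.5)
Your proposal is correct and is the same argument the paper gives: identify $\underline{\Omega}^1_X\simeq\Omega^{[1]}_X$ via \cite[Theorem 5.3]{DB81} and apply \autoref{thm:Fogarty's question}. Your compatibility check (the maps agree on $X_{\reg}$ and the target is torsion-free) is the same one the paper makes in its background section.
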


\vspace{\medskipamount}

\noindent\emph{AI Disclosure}. 
The authors used AI models to assist with literature review, brainstorming, and testing examples. An AI model suggested the construction of $\eta$ in the proof of Proposition \ref{prop:typeI-prime-dim}, and we consulted AI for the case-by-case analysis of Proposition \ref{prop:ZVW->TypeI}. The proof of \autoref{thm:Km-regular-G-torsor} grew out of several brainstorming sessions with AI. All mathematical arguments and computations obtained with AI assistance were independently checked and verified by the authors, who take full responsibility for the arguments in the paper.

\subsection*{Acknowledgments} We thank Mihnea Popa for helpful comments on a preliminary version of this article, and Brad Dirks for discussions related to finite quotients that are local complete intersections.

\section{Background}

\subsection{Differential forms on finite quotients}
If $X$ is a complex variety and $F\subset \mathbb{C}$ is a subfield, $\Omega_{X/F}^p$ denotes the $p$-th K\"ahler differential of $X$ over $F$; if $F=\mathbb{C}$, we simply let $\Omega_X^p := \Omega_{X/\mathbb{C}}^p$. If $X$ is normal and $j\colon X_{\mathrm{reg}}\hookrightarrow X$ is the inclusion of the smooth locus, the sheaf of reflexive $p$-forms is
\[
\Omega_X^{[p]}:=j_*\Omega^p_{X_{\mathrm{reg}}}.
\]
In particular,
\[
\Omega_X^{[1]}\simeq(\Omega_X^1)^{\vee\vee}.
\]

Let a finite group $G$ act on a smooth complex variety $Y$ and let $\pi:Y\to X=Y/G$ be the quotient map. Pullback of differentials gives a natural map
\[
 d\pi:\Omega_X^1\longrightarrow(\pi_*\Omega_Y^1)^G.
\]
We then have canonical isomorphisms
\[
 (\pi_*\Omega_Y^1)^G
 \simeq j_*\Omega_{X_{\mathrm{reg}}}^1
 \simeq(\Omega_X^1)^{\vee\vee}
 =:\Omega_X^{[1]};
\]
see e.g., \cite[Lemma~1.8]{Steenbrink} and the introduction of \cite{Jamet}.  Under these identifications, $d\pi$ is the canonical map $\Omega^1_X\to\Omega^{[1]}_X$.

We also recall the description of the reflexive differentials in terms of the cdh-differentials. Let $X$ be a scheme over $F$, and let $a:X_{\mathrm{cdh}}\to X_{\mathrm{Zar}}$ be the change-of-topology map. We let
\[
    \Omega^p_{\mathrm{cdh},X/F}=Ra_*a^*\Omega^p_{X/F}.
\]
There are natural comparison maps $\Omega_{X/F}^p\to \Omega^p_{\mathrm{cdh},X/F}$ for every $p$. The complex $\Omega^p_{\mathrm{cdh},X/F}$ is isomorphic to the $p$-th Du Bois complex, see \cite[Lemma~2.1]{Shen2025}. We will therefore also write
\[
    \underline{\Omega}^p_{X/F}:=\Omega^p_{\mathrm{cdh},X/F}\quad\textrm{and}\quad\underline{\Omega}^p_{X}:=\underline{\Omega}^p_{X/\CC}
\]

Recall that a variety $X$ is said to have finite quotient singularities if every $x\in X$ has an \'etale neighborhood isomorphic to a quotient $U/G$, where $U$ is smooth and $G$ is finite. Since finite quotient singularities are rational, $\cH^0\underline{\Omega}^1_X$ is reflexive; see \cite[Remark~2.5]{Shen2025}. Since the comparison map is an isomorphism on the smooth locus, it follows that for a complex variety with finite quotient singularities,
\[
\mathcal H^0\underline{\Omega}^1_X
\simeq
j_*\Omega^1_{X_{\mathrm{reg}}}
=
\Omega_X^{[1]}.
\]
Under these identifications, the comparison map
\[
\Omega_X^1\longrightarrow\cH^0\underline{\Omega}^1_X
\]
agrees with the map $d\pi$. Indeed, the two maps agree over $X_{\mathrm{reg}}$, and their common target is torsion-free.

\subsection{$K$-regularity}
A scheme $X$ is called \textit{$K_m$-regular} if the natural maps
\[
 K_m(X)\longrightarrow K_m(X\times\mathbb A^r)
\]
are isomorphisms for every $r\geq0$. By \cite[\S4 Theorem 3 Corollary 2 and \S7 Proposition 4.1]{Quillen1973}, a regular scheme is $K_m$-regular for every $m$. Furthermore, $K_m$-regularity implies $K_{m-1}$-regularity \cite{Vorst1979, Dayton1980}. One way to measure the failure of $K_m$-regularity is through \textit{Bass's $NK$-groups}, given by
\[
 NK_m(X)=\operatorname{coker}\!\left(
 K_m(X)\longrightarrow K_m(X\times\mathbb A^1)
 \right).
\]
By \cite[Theorem 0.1]{CHWW}, $X$ is $K_m$-regular if and only if $NK_q(X)=0$ for every $q\leq m$.

The relation between the Bass $NK$-groups and differential forms is given by the comparison between derived K\"ahler differentials and cdh differentials \cite{CHSW,Cortinas2007,CHWW}. Let $F$ be a field of characteristic zero, and write
\[
L^p_{X/F}:=L\bigwedge^pL_{X/F}.
\]
We define
\[
C^p_{X/F}:=
\operatorname{Cone}\left(
L^p_{X/F}\longrightarrow
\underline{\Omega}^p_{X/F}
\right).
\]
When $F=\mathbb C$, we simply write $L_X^p$ and $C_X^p$.

For an affine scheme $X$ over $\mathbb Q$, the description of the Bass $NK$-groups in \cite{CHWW}, in the form recorded in \cite[Theorem~2.8 and Remark~2.9]{Shen2025}, gives
\[
X\text{ is }K_m\text{-regular}
\quad\Longleftrightarrow\quad
\mathcal H^j(C^p_{X/\mathbb Q})=0
\quad\text{for all }p,j\text{ with }j-p\geq-m.
\]

For projective varieties there is also a global criterion for $K$-regularity. If $X$ is a complex projective variety, then \cite[Theorem~7.1]{Shen2025} shows $K_m$-regularity is equivalent to the comparison maps
\[
H^i(X,L_X^p)\longrightarrow H^i(X,\underline{\Omega}_X^p)
\]
being isomorphisms for $i-p\geq-m+1$. Thus, if $R\Gamma(X,C_X^p)=0$ for all $p\geq0$, then $X$ is $K_m$-regular for all $m$.

\subsection{Zassenhaus-Vincent-Wolf classification}
A complex representation $V$ of a finite group $G$
is called \emph{fixed-point free} if
\[
 V^g=0\qquad\text{for every }g\neq1;
\]
equivalently, $1$ is not an eigenvalue for any non-trivial element of $G$. If $G\neq1$ and $\dim V\geq2$, the quotient $V/G$ is then singular only at the image of the origin.  Conversely, after quotienting out by the subgroup generated by pseudo-reflections, the representation
defining an isolated quotient singularity is fixed-point free.

The classification of finite groups admitting fixed-point free complex representations started with work of Zassenhaus \cite{Zassenhaus1935} and Vincent \cite{Vincent1947}, and was completed by Wolf \cite{Wolf2011}; see the introduction of \cite{Stepanov} for more details. We use the classification in the form given by Stepanov \cite[\S3]{Stepanov}. In Theorems 3.1 and 3.2 of \cite{Stepanov}, Stepanov gives the six families of groups admitting fixed-point free complex representations, and in Theorem 3.6 and Table 2 of \cite{Stepanov}, he lists the irreducible fixed-point free complex representations of these groups.

\section{Answering Fogarty's question:~proof of \autoref{thm:Fogarty's question}}

We prove \autoref{thm:Fogarty's question} in three steps. First we reduce to the case of fixed-point free representations, then using the Zassenhaus--Vincent--Wolf classification we reduce to certain Type I representations, and lastly we prove \autoref{thm:Fogarty's question} for these representations. Each of these steps is handled in a different subsection.

\subsection{Reduction to the case of fixed-point free representations}

The following result allows us to look \'etale locally.

\begin{lemma}\label{lemma:surj-etale-local}
    Let $f: X\to Y$ be a map between normal varieties that is \'etale at $x\in X$. Then the following are equivalent:
    \begin{enumerate}
        \item The map $\Omega_X^1\to \Omega_X^{[1]}$ is surjective at $x$,
        \item The map $\Omega_Y^1\to \Omega_Y^{[1]}$ is surjective at $f(x)$.
    \end{enumerate}
\end{lemma}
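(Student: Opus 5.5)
The plan is to show that both sheaves, and the map between them, are compatible with étale base change. Then the cokernel of $\Omega^1_X\to\Omega^{[1]}_X$ is the pullback of the cokernel on $Y$. Since $f$ is étale at $x$, I first shrink $X$ to an open neighbourhood $U$ of $x$ on which $f$ is étale. Surjectivity at $x$ is a local question, so nothing is lost. Write $Q_X=\coker(\Omega^1_X\to\Omega^{[1]}_X)$ and $Q_Y$ similarly, both coherent sheaves. Statement (1) says $(Q_X)_x=0$, and (2) says $(Q_Y)_{f(x)}=0$.

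\textbf{Step 1: Kähler differentials.} For an étale map there is a canonical isomorphism $f^*\Omega^1_Y\xrightarrow{\sim}\Omega^1_U$. This is the standard exact sequence $f^*\Omega^1_Y\to\Omega^1_U\to\Omega^1_{U/Y}\to0$, together with $\Omega^1_{U/Y}=0$ and injectivity on the left for étale (smooth) morphisms.

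\textbf{Step 2: reflexive differentials.} I need a compatible isomorphism $f^*\Omega^{[1]}_Y\simeq\Omega^{[1]}_U$. First, $f$ is flat, so $f^*$ commutes with $\mathcal{H}om$ of coherent sheaves: $f^*(\mathcal F^{\vee})\simeq (f^*\mathcal F)^{\vee}$. Applying this twice gives $f^*(\Omega^1_Y)^{\vee\vee}\simeq (f^*\Omega^1_Y)^{\vee\vee}\simeq(\Omega^1_U)^{\vee\vee}$. By the identification $\Omega^{[1]}=(\Omega^1)^{\vee\vee}$ recalled in the background, valid since $X$ and $Y$ are normal, this is the desired isomorphism. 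The natural map $\mathcal F\to\mathcal F^{\vee\vee}$ is functorial, and flat pullback commutes with it. Hence the square formed by $f^*(\Omega^1_Y\to\Omega^{[1]}_Y)$ and $\Omega^1_U\to\Omega^{[1]}_U$ commutes. As an alternative check, one can use $\Omega^{[1]}=j_*\Omega^1_{\reg}$, since $f^{-1}(Y_{\reg})=U_{\reg}$ for étale $f$ and flat base change holds for $j_*$ of quasi-coherent sheaves.

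\textbf{Step 3: conclude.} Since $f^*$ is right exact, Steps 1–2 give $Q_U\simeq f^*Q_Y$. Taking stalks, $(Q_X)_x\simeq (Q_Y)_{f(x)}\otimes_{\cO_{Y,f(x)}}\cO_{X,x}$. The local map $\cO_{Y,f(x)}\to\cO_{X,x}$ is flat and local, hence faithfully flat. Therefore $(Q_X)_x=0$ if and only if $(Q_Y)_{f(x)}=0$. Alternatively, Nakayama and the fibre over the closed point give the same conclusion.

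The only step needing care is Step 2, namely checking that dualization commutes with flat pullback for coherent sheaves and that the resulting isomorphism is compatible with the canonical maps. Both are standard and use only that $f$ is flat and the sheaves are coherent, so I expect no real obstacle.
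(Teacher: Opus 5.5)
Your proposal is correct and follows essentially the paper's route. Both arguments identify $f^*\Omega^1_Y\simeq\Omega^1_X$ and $f^*\Omega^{[1]}_Y\simeq\Omega^{[1]}_X$ compatibly near $x$, deduce that the cokernel on $X$ is the flat pullback of the cokernel on $Y$, and conclude by faithful flatness of the local map $\cO_{Y,f(x)}\to\cO_{X,x}$. The only minor difference is in the reflexive identification: the paper cites Hartshorne's result that flat pullback preserves reflexivity and then compares on $X_{\reg}=f^{-1}(Y_{\reg})$, whereas you commute the double dual (or $j_*$) with flat pullback directly, which amounts to the same thing.
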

\begin{proof}

    Since K\"ahler differentials commute with \'etale base change \cite[Tag 0FLV]{stacks-project}, we have $f^*\Omega_Y^1\simeq\Omega_X^1$. Since $f$ is flat, $f^*\Omega_Y^{[1]}$ is reflexive by \cite[Proposition 1.8]{Hartshorne1980}. Over $f^{-1}(Y_{\reg})=X_{\reg}$ it agrees with $\Omega^1_{X_{\reg}}$, and so $f^*\Omega_Y^{[1]}\simeq\Omega_X^{[1]}$. We therefore have a commutative diagram
    \[\begin{tikzcd}
        f^*\Omega_{Y,f(x)}^1 \arrow[r] \arrow[d, "\simeq"'] &
f^*\Omega_{Y,f(x)}^{[1]} \arrow[d, "\simeq"] \\
\Omega_{X,x}^1 \arrow[r] &
\Omega_{X,x}^{[1]}
    \end{tikzcd}\]
    where the vertical maps are isomorphisms. Since pullback preserves cokernels, 
    \[
        \coker(\Omega^1_{X,x}\to\Omega^{[1]}_{X,x})\,\simeq\, \cO_{X,x}\otimes_{\cO_{Y,{f(x)}}}\coker(\Omega^1_{Y,f(x)}\to\Omega^{[1]}_{Y,f(x)}).
    \]
    Since $\cO_{Y, f(x)}\to\cO_{X,x}$ is an \'etale (hence flat) map of local rings, it is faithfully flat by \cite[Tag 00HR]{stacks-project}. Thus, $\Omega_{Y, f(x)}^1\to \Omega_{Y,f(x)}^{[1]}$ is surjective if and only if $\Omega_{X,x}^1\to \Omega_{X,x}^{[1]}$ is.
\end{proof}

The following is the main result of this subsection.

\begin{proposition}\label{prop:reduction}
To prove \autoref{thm:Fogarty's question}, it suffices to show that
    \[
        C_G(V):=\coker\left(\Omega_{V/G}^1\longrightarrow(\pi_*\Omega_V^1)^G\right)\neq0
    \]
for every fixed-point free representation $V$ of a non-trivial finite group $G$ with $\dim V\geq2$.
\end{proposition}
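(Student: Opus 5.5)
The plan is to assume the nonvanishing of $C_G(V)$ for all fixed-point free $V$ with $G\neq 1$ and $\dim V\geq 2$, and to deduce \autoref{thm:Fogarty's question}. The implication (2)$\Rightarrow$(1) is immediate: on a smooth variety $\Omega^1_X$ is locally free, hence reflexive, so $\Omega^1_X\to\Omega^{[1]}_X$ is an isomorphism. For (1)$\Rightarrow$(2), suppose $X$ is singular at $x$ but the map is surjective at $x$; we derive a contradiction. Since $X$ has finite quotient singularities, there is an étale neighborhood of $x$ isomorphic to $U/G$ with $U$ smooth. By the standard linearization (Cartan) argument we may shrink $U$ around a point $u$ over $x$, replace $G$ by the stabilizer $G_u$, and pass to a further étale map $U/G_u \to T_uU/G_u = V/G$ sending $u$ to $0$. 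Applying \autoref{lemma:surj-etale-local} twice, surjectivity at $x$ becomes surjectivity of $\Omega^1_{V/G}\to\Omega^{[1]}_{V/G}$ at the image of $0$. Here we use the identification $\Omega^{[1]}_{V/G}=(\pi_*\Omega^1_V)^G$ from the background section.

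Next we remove pseudo-reflections. Let $H\subset G$ be the normal subgroup generated by pseudo-reflections. By Chevalley--Shephard--Todd, $V/H\simeq \bA^n$ is smooth, and $V/G=(V/H)/(G/H)$. After a linear change of coordinates, $G/H$ acts linearly on $V':=\bA^n$, for instance by taking homogeneous generators of $\CC[V]^H$ on which $G/H$ acts, using that $H$ is normal. Moreover $G/H$ contains no pseudo-reflections on $V'$. Since $X$ is singular at $x$, $G/H\neq 1$.

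The main obstacle is to reduce further to a fixed-point free representation. Let $K=G/H$ act on $V'$ without pseudo-reflections. We argue by induction on $\dim V'$. Consider the stratification by stabilizers. If some $1\neq k\in K$ has $V'^k\neq 0$, take a nonzero point $v\in V'^{K_v}$ with $K_v\neq 1$ and $K_v$ minimal among nontrivial stabilizers. Then $V'/K$ near $\bar v$ is étale locally $V'/K_v\simeq V'^{K_v}\times (W/K_v)$, where $W$ is a $K_v$-complement. The surjectivity at $\bar 0$ propagates to nearby points, because the support of the cokernel is closed and the map is surjective in a neighborhood. Hence it holds at $\bar v$, and by \autoref{lemma:surj-etale-local} it holds for $V'^{K_v}\times W/K_v$ at the origin.

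For the product with affine space, $\Omega^1$ and $\Omega^{[1]}$ both split as (pullback from $\bA^m$) $\oplus$ (pullback from $W/K_v$). Restricting to a slice then gives surjectivity for $W/K_v$ at $0$. This step is the one I would check carefully: it needs the flat base change compatibility of both $\Omega^1$ and reflexive hulls, in the same way as in the lemma. Now $W/K_v$ is singular, since $K_v$ has no pseudo-reflections on $W$. We have $\dim W<\dim V'$. By minimality of $K_v$, every nontrivial element of $K_v$ fixes only $0$ in $W$, so $W$ is fixed-point free for $K_v$. Here $\dim W\geq 2$, since a nontrivial group acting on a line is generated by pseudo-reflections. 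If instead no such $v$ exists, $V'$ itself is fixed-point free, with $\dim V'\geq 2$ for the same reason. In either case the cokernel $C_{K'}(W)$ for a fixed-point free pair vanishes at the origin.

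Finally, $C_{K'}(W)$ is supported only at the origin: the quotient is smooth elsewhere, since the action is free away from $0$. So vanishing at the origin means $C_{K'}(W)=0$, which contradicts the hypothesis and completes the reduction.
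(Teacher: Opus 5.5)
Your argument is correct, but it reaches the fixed-point free case by a different route than the paper.

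The paper works at a general point of a maximal-dimensional component of $\Sing(X)$. It quotes Jamet's slice argument to replace $X$ by an isolated singularity $V/G_y$, and quotes Jamet again to pass to a small representation $W$ of $H$. Only then does it use the $\bG_m$-action and Chevalley--Shephard--Todd to see that $W$ is fixed-point free.

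You instead:
\begin{enumerate}
\item work at an arbitrary singular point where surjectivity is assumed;
\item remove pseudo-reflections first (Prill's reduction);
\item use openness of the surjectivity locus and the conical structure of $V'/K$ to move to a nearby point with minimal nontrivial stabilizer;
\item split off the fixed subspace using the product decomposition of $\Omega^1$ and $\Omega^{[1]}$.
\end{enumerate}
Your version is self-contained rather than relying on the internals of Jamet's proofs. It is also pointwise, so it gives $\Sing(X)\subseteq\Supp\coker(\Omega^1_X\to\Omega^{[1]}_X)$ directly. The paper's version is shorter, because the genericity of the chosen point makes isolatedness automatic.

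Three steps should be tightened.

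\emph{Rescaling $v$.} The point $\bar v$ need not lie in the open set around $\bar 0$ where surjectivity holds. Replace $v$ by $\lambda v$ for a suitable $\lambda\neq 0$, which has the same stabilizer. Alternatively, note that the cokernel is $\bG_m$-equivariant, so its support is a closed cone and is therefore empty if it misses $\bar 0$.

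\emph{The minimality claim.} If $1\neq k\in K_v$ fixes $0\neq w\in W$, the stabilizer of $v+w$ need not lie in $K_v$, so you cannot use $v+w$ directly. Instead take $u=v+\epsilon w$ inside the open set $\{u : K_u\subseteq K_v\}$ around $v$. Then $k\in K_u\subsetneq K_v$, and the inclusion is strict because $W^{K_v}=0$. This contradicts minimality.

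\emph{Smallness of $G/H$.} You assert that $G/H$ contains no pseudo-reflections on $V/H$ without proof. This is standard, for example via multiplicativity of ramification indices along divisors, and it is also the step the paper delegates to Jamet.

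Finally, no induction on $\dim V'$ is actually used; your reduction happens in a single step.
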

\begin{proof}
    To prove \autoref{thm:Fogarty's question}, we may look \'etale locally where $X$ is a global quotient by a finite group, i.e., we have $\pi\colon Y\to Y/G=:X$ with $Y$ smooth, $G$ a finite group, and $X$ singular. Covering $X$ by open affines, we may assume both $X$ and $Y$ are affine. Let $Z\subset\Sing(X)$ be a maximal-dimensional irreducible component of the singular locus of $X$. Then, choosing a Luna slice at a general point $\pi(y)\in Z$ and using that $\Omega^1$ and $\Omega^{[1]}$ commute with \'etale base change, the proof of Theorem 4.1 in \cite{Jamet} (see the paragraph on page 164 above the Conclusions section) shows that we may replace $X$ by $V/G_y$, where $T_yY=(T_yY)^{G_y}\oplus V$ as $G_y$-representations; furthermore, $V/G_y$ has an isolated singularity at the origin. By the first sentence of the proof of Theorem 5.1 in \cite{Jamet} (which does not use the assumption that the group is abelian), we are reduced to the case of a small representation $W$ of $H$ with an isolated singularity at $0$, i.e., $H$ has no non-trivial pseudo-reflections.
    
    We next show $W/H$ is smooth away from the origin. Since $W$ is a representation, the $H$-action commutes with the $\bG_m$-action given $\lambda\cdot w=\lambda w$. Thus, the $\bG_m$-action descends to $W/H$. If $z\in W/H$ is singular, we then see $\lambda z$ is as well for all $\lambda$. Thus $0$ is in the closure of $\bG_m z$, and so $z=0$. This shows $\Sing(W/H)=\{0\}$.
    
    Since $W/H$ is singular, we see $\dim W\geq2$. Lastly, every $0\neq w\in W$ maps to a smooth point, so the Chevalley--Shephard--Todd Theorem (see, e.g., \cite[Remark 11.7]{Etingof2024}) tells us that $H_w$ is generated by pseudo-reflections. If $H_w\neq1$, then $H$ contains a non-trivial pseudo-reflection which is a contradiction. Thus, $W$ is a fixed-point free representation.
\end{proof}

\subsection{Reducing the case of fixed-point free representations to certain Type I representations}

For a $G$-representation $V$, with quotient map $\pi:V\to V/G$, write
\[
 C_G(V)=
 \operatorname{coker}\!\left(
 \Omega_{V/G}^1\longrightarrow(\pi_*\Omega_V^1)^G
 \right)
\]
as in \autoref{prop:reduction}. To prove \autoref{thm:Fogarty's question}, we must consider all types in the Zassenhaus--Vincent--Wolf classification. The following results will be useful in reducing the general case to a representation of a Type I group of prime dimension $p\neq 2$.

\begin{theorem}[Jamet]\label{lem:surface}
Let $V$ be a nontrivial two-dimensional fixed-point free representation of
$G$.  Then $C_G(V)\neq0$.
\end{theorem}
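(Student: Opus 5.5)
The statement concerns a two-dimensional fixed-point free representation $V$ of $G\neq1$. The quotient $X=V/G$ is then a surface with an isolated quotient singularity at the origin, so it suffices to show $\Omega^1_X\to\Omega^{[1]}_X$ is not surjective at $0$. I would argue by a graded count of generators. Since $G$ contains no pseudo-reflections (fixed-point freeness with $\dim V=2$), $X$ is singular at $0$. The ring $R=\CC[x,y]^G$ is graded, and $\Omega^1_X$ is generated as an $R$-module by $df_1,\dots,df_n$, where $f_1,\dots,f_n$ is a minimal homogeneous generating set of $R$. Note $n\geq3$, since otherwise $X\simeq\bA^2$.

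The key step is to exhibit a $G$-invariant $1$-form on $V$ whose degree is too low to lie in the image. The Euler-type forms look promising: the contraction of the invariant $2$-form $\omega$ (or its $\det^{-1}$-twist) with the Euler vector field. Concretely, I would take $\eta=h\,(x\,dy-y\,dx)$, where $h$ is a homogeneous semi-invariant polynomial with character $\det^{-1}$ of minimal degree. Such an $\eta$ is $G$-invariant, since $x\,dy-y\,dx$ transforms by $\det$. Suppose $\eta=\sum a_i\,df_i$ with $a_i\in R$. Contracting with the Euler field $E=x\partial_x+y\partial_y$ kills the left side, while the right side gives $\sum a_i\deg(f_i)f_i$. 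So we need $\sum \deg(f_i)a_if_i=0$.

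Comparing degrees, $\eta$ has polynomial degree $\deg h+2$. I would instead compare $\eta$ with the invariant forms $df$ of small degree. The image of $\Omega^1_X$ in degree $d$ (counting $dx$ as degree $1$) is spanned by $a\,df_i$. I would show that the lowest-degree invariant $1$-forms of the shape $g\,(x\,dy-y\,dx)$ are not in this span. The reason is that every element $\sum a_i\,df_i$ satisfies $d(\sum a_i\,df_i)=\sum da_i\wedge df_i$, which lies in the ideal generated by pairwise Jacobians $df_i\wedge df_j$. Meanwhile $d\eta$ is computed directly: $d(h\,(x\,dy-y\,dx))=(\deg h+2)\,h\,dx\wedge dy$. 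So it suffices to show $h\,dx\wedge dy\notin\sum R\,df_i\wedge df_j$. Writing $df_i\wedge df_j=J_{ij}\,dx\wedge dy$, this reduces to showing $h$ is not in the $R$-span of the Jacobians $J_{ij}$. Each $J_{ij}$ is a $\det^{-1}$-semi-invariant of degree $\deg f_i+\deg f_j-2$.

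The main obstacle is this last degree inequality: $\deg h<\deg f_i+\deg f_j-2$ for all $i<j$. I would try to prove it using $\deg h\leq |G|-2$ or a Molien-series estimate, combined with the lower bound on invariant degrees for groups without fixed vectors. I would then verify it case by case over the finite subgroups of $GL_2$ acting freely on $\CC^2\setminus0$: the cyclic and binary polyhedral families via their known invariants, and their twists by scalars, using Stepanov's tables. This case analysis is where I expect difficulty. Alternatively, one can simply cite Jamet's theorem, which treats exactly this dimension-two case.
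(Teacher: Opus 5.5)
The paper's proof of this statement is just the citation of Jamet's Theorem~4.1, so your closing sentence is exactly what the paper does. The direct argument that makes up the rest of your proposal, however, has a genuine gap. The reduction itself is fine, and in fact lossless. By Cartan's formula $D\alpha=d\iota_E\alpha+\iota_E d\alpha$ for a homogeneous invariant $1$-form $\alpha$ of degree $D$, the form $\eta$ lies in the image of $\Omega^1_{V/G}$ if and only if $h\in\sum_{i<j}R\,J_{ij}$. This also shows why your first Euler-contraction test cannot work on its own. With $e_i=\deg f_i$, the element $\iota_E(df_1\wedge df_2)=e_1f_1\,df_2-e_2f_2\,df_1$ lies in the image, has the same shape $J_{12}(x\,dy-y\,dx)$ as $\eta$, and is killed by $\iota_E$.

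All of the content therefore sits in the inequality $\deg h<\deg f_i+\deg f_j-2$, and you do not prove it. The tools you suggest cannot give it. A bound like $\deg h\le |G|-2$ is far too weak: for the binary icosahedral group the two smallest generators give $\deg f_i+\deg f_j-2=30$, while $|G|-2=118$. Nor is there a useful general lower bound on the $\deg f_i$, since for fixed-point free groups they can be as small as $2$ (e.g.\ $xy$ for $\mu_n\subset SL_2$). The case-by-case check over Stepanov's list is only announced, not carried out.

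What is true, and suffices, is the sharper bound $\deg h+2\le\min_i\deg f_i$.

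\emph{Cyclic case.} Take $G$ cyclic of order $n$ acting with weights $(1,q)$. Then $x^ay^b\,dx\wedge dy$ is invariant exactly when $x^{a+1}y^{b+1}$ is invariant. A minimal-degree invariant monomial is either divisible by $xy$, or equals $x^n$ or $y^n$. In the latter case, $x^cy$ with $c\equiv -q \pmod n$ and $1\le c\le n-1$ is an invariant divisible by $xy$ of degree at most $n$. Either way the bound holds.

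\emph{Non-cyclic case.} Here one needs real input. For example, the Pinkham--Demazure description of $R$ together with Watanabe's formula for $\omega_R$ gives a nonzero invariant $2$-form in degree $|G\cap\CC^*|$, which divides every invariant degree.

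Until an argument of this kind is supplied, your proof of the theorem still rests on the citation to Jamet.
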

\begin{proof}
This is immediate from \cite[Theorem 4.1]{Jamet}.
\end{proof}

\begin{lemma}\label{fixed-pt-free->singular-quotient}
Let $V$ be a fixed-point free $G$-representation. If $V/G$ is smooth, then $G$ is trivial or $\dim V=1$.
\end{lemma}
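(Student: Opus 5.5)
The plan is to use the Chevalley--Shephard--Todd theorem together with the fixed-point free hypothesis. Assume $V/G$ is smooth, and suppose $\dim V\geq 2$. We will show that $G$ is trivial.

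\emph{Step 1: $G$ is generated by pseudo-reflections.} Since $V/G$ is smooth at the image of the origin, the Chevalley--Shephard--Todd theorem (e.g.\ \cite[Remark 11.7]{Etingof2024}), applied to the stabilizer $G_0=G$ of $0\in V$, shows that $G$ is generated by pseudo-reflections. One should be careful here: the standard statement asks for $V/G$ to be smooth, or for $\mathbb{C}[V]^G$ to be a polynomial ring. Smoothness at the cone point of a graded quotient already forces the invariant ring to be polynomial, because a graded local ring that is regular at the irrelevant ideal is a polynomial ring on homogeneous generators. So this step goes through as stated.

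\emph{Step 2: a pseudo-reflection cannot lie in $G$.} Suppose $g\in G$ is a pseudo-reflection, i.e.\ $g\neq 1$ and $V^g$ is a hyperplane. Then $\dim V^g=\dim V-1\geq 1$, so $V^g\neq 0$. This contradicts the fixed-point free assumption $V^g=0$ for $g\neq 1$. Hence $G$ contains no pseudo-reflections.

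\emph{Conclusion.} A group generated by pseudo-reflections that contains none is generated by the empty set, so $G=1$. If instead $\dim V=1$, we are in the other allowed case. Thus either $G$ is trivial or $\dim V=1$.

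The only point needing any care is the precise form of Chevalley--Shephard--Todd used in Step 1, namely that smoothness of $V/G$ at the image of $0$ forces $G$ to be generated by pseudo-reflections. This is exactly the form the paper already invokes in the proof of \autoref{prop:reduction}, so no new input is needed.
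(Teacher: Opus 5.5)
Your proof is correct and is essentially the paper's argument: Chevalley--Shephard--Todd gives that $G$ is generated by pseudo-reflections, and fixed-point-freeness rules out any nontrivial pseudo-reflection once $\dim V\geq 2$, forcing $G=1$. The paper additionally states explicitly that fixed-point-freeness makes $V$ faithful, which is needed to apply Chevalley--Shephard--Todd to $G$ itself rather than to its image in $GL(V)$. In your setting this is immediate since $V\neq 0$, but it is worth saying.
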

\begin{proof}
    Since $G$ acts on $V$ without fixed points, it is a faithful representation. If $V/G$ is smooth, then by the Chevalley--Shephard--Todd Theorem \cite{Chevalley,ShephardTodd}, $G$ is generated by pseudo-reflections. Since every pseudo-reflection $g$ fixes a hyperplane, we see either $g=1$ or the hyperplane must be trivial, namely $0$. In the latter case, $\dim V=1$. If $\dim V>1$, then $G$ is generated by pseudo-reflections, all of which are trivial, hence $G=1$.
\end{proof}

\begin{lemma}\label{l:induction-properties}
Let $H\subset G$ be a subgroup, $W$ an $H$-representation, and $V=\Ind_H^GW$. 
\begin{enumerate}
    \item\label{redInd} If $V$ is irreducible, then $W$ is as well.
    \item\label{fpfInd} If $V$ is fixed-point free, then $W$ is as well.
\end{enumerate}
\end{lemma}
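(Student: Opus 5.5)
Both parts follow from the restriction of $V=\Ind_H^GW$ to $H$. By Mackey theory, or directly from the construction $V=\bigoplus_{gH\in G/H} gW$, the representation $W$ sits inside $\Res_H^G V$ as the summand $1\cdot W$. This summand is $H$-stable, and the $H$-action on it is the original action on $W$. I will use this decomposition throughout. I assume $W\neq0$, which is implicit, since otherwise $V=0$.

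For part \eqref{redInd}, I would argue by contrapositive using exactness of induction. Suppose $W$ is reducible, so there is a proper nonzero $H$-subrepresentation $W'\subset W$. Then $\Ind_H^G W'=\bigoplus_{gH} gW'$ is a $G$-stable subspace of $V$. It is nonzero because $W'\neq0$. It is proper because its dimension is $[G:H]\dim W'<[G:H]\dim W=\dim V$. Hence $V$ is reducible. Alternatively, if $W=W_1\oplus W_2$ with both summands nonzero, then $V\simeq\Ind W_1\oplus\Ind W_2$. Only the subrepresentation version is needed, and it avoids any semisimplicity assumption, although over $\CC$ that assumption holds anyway.

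For part \eqref{fpfInd}, let $1\neq h\in H$ and suppose $w\in W$ satisfies $hw=w$. View $w$ as the element $1\otimes w$ of the summand $1\cdot W\subset V$. Since $h\in H$, we have $h\cdot(1\otimes w)=1\otimes hw=1\otimes w$. So $1\otimes w\in V^h$, and $h\neq1$ in $G$. Fixed-point freeness of $V$ then gives $1\otimes w=0$. The map $W\to V$, $w\mapsto 1\otimes w$, is injective, so $w=0$. Therefore $W^h=0$ for every nontrivial $h\in H$.

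I expect no real obstacle. The only point needing care is the identification of $W$ with the $H$-stable summand $1\otimes W$ of $\Res_H^G\Ind_H^G W$, together with the injectivity of $W\to V$. Both follow from the fact that $\CC[G]$ is a free right $\CC[H]$-module with basis a set of coset representatives.
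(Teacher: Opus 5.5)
Your proposal is correct and matches the paper's proof: for (1) the paper uses exactly your alternative argument, $\Ind_H^G(W_1\oplus W_2)\simeq\Ind_H^G W_1\oplus\Ind_H^G W_2$, and your subrepresentation version is only a variant that avoids using complete reducibility. For (2) the paper likewise views $W$ as the identity-coset $H$-subrepresentation of $\Res^G_H V$, so a nonzero vector of $W$ fixed by some $1\neq h\in H$ would be a nonzero vector of $V$ fixed by $h$.
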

\begin{proof}
    For (\ref{redInd}), if $W=W_1\oplus W_2$ with $W_i$ non-trivial $H$-subrepresentations, then $V=\Ind_H^GW_1\oplus\Ind_H^GW_2$. For (\ref{fpfInd}), note that $W$ is the $H$-subrepresentation of $\Res^G_HV$ given by the identity coset. Thus, if $h\neq1$ fixes $0\neq w\in W$, then we see $h$ is also a non-trivial element of $G$ which fixes $w\in\Res^G_HV$, which is equal to $V$ as a vector space.
\end{proof}

\begin{lemma}[Direct summands]\label{lem:summand}
If $V=W\oplus Z$ as $G$-representations and $C_G(W)\neq0$, then
$C_G(V)\neq0$.
\end{lemma}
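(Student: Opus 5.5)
Work at the level of invariant rings. Write $S=\Sym(W^\vee)$, $T=\Sym(Z^\vee)$, so $\CC[V]=S\otimes T$, with $R=\CC[V]^G$ and $R_W=S^G$. The aim is to show that the closed embedding $W\hookrightarrow V$, $w\mapsto(w,0)$, which induces the restriction $\rho\colon R\to R_W$, carries invariant $1$-forms on $V$ onto invariant $1$-forms on $W$ and Kähler differentials into Kähler differentials. The tool is the $G$-equivariant projection $p\colon V\to W$, $(w,z)\mapsto w$; equivalently, use the inclusion $i\colon W\to V$ and the projection $p$ together.

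\textbf{Step 1 (restriction of forms).} Define a map
\[
\Phi\colon (\Omega^1_{\CC[V]})^G\to(\Omega^1_{S})^G
\]
by pulling back along $i$, i.e.\ $\Phi=i^*$. Since $i$ is $G$-equivariant, $\Phi$ preserves invariants. It is surjective, because any invariant $\omega\in(\Omega^1_S)^G$ satisfies $\omega=i^*(p^*\omega)$ and $p^*\omega$ is $G$-invariant. On Kähler differentials of the quotient, $i^*$ sends $\Omega^1_{R}$, i.e.\ the $R$-span of the $df$ with $f\in R$, to the span of $d(f|_W)$ with $f|_W\in S^G$. So $\Phi(\mathrm{im}\,\Omega^1_R)\subseteq\mathrm{im}\,\Omega^1_{R_W}$, where this image is the $S^G$-submodule generated by $d(S^G)$. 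Here one uses that $\rho(R)\subseteq S^G$ and that $\rho$ is in fact surjective, since $p^*$ gives a section.

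\textbf{Step 2 (conclude).} We therefore get an induced surjection $C_G(V)\twoheadrightarrow C_G(W)$ of abelian groups, compatible with the module structures via $\rho$. Indeed, composing $\Phi$ with the quotient map to $C_G(W)$ gives a surjection that kills the image of $\Omega^1_R$. If $C_G(W)\neq0$, then $C_G(V)\neq0$.

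The main point to check carefully is the identification of $C_G(\cdot)$ with the cokernel at the level of global sections of the affine quotient. This holds because $V/G$ is affine and the sheaves are quasi-coherent, so $C_G(V)$ is the cokernel $(\Omega^1_{\CC[V]})^G/(R\cdot dR)$. Beyond that, the only subtle point is that $i^*(R\,dR)\subseteq S^G\,d(S^G)$, which is immediate from functoriality of $d$ and the fact that restrictions of invariants are invariants. No ideal-theoretic issues arise, because we only need surjectivity of $\Phi$, which the section $p^*$ supplies directly.
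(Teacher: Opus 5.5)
Your proof is correct and is essentially the paper's argument: both rest on the $G$-equivariant retraction $p\circ i=\id_W$, which gives $i^*p^*=\id$ on invariant forms and preserves the images of K\"ahler differentials of the quotients. The paper uses this to show $C_G(W)\hookrightarrow i^*C_G(V)$ is a split injection of sheaves on $W/G$. You instead read off a surjection $C_G(V)\twoheadrightarrow C_G(W)$ directly on global sections, which is the same way the paper proves its induction lemma.
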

\begin{proof}
Let $p\colon V/G\to W/G$ and $i\colon W/G\to V/G$ be the natural maps. We have a commutative diagram where the rows are exact
\[
    \xymatrix{
        i^*p^*\Omega^1_{W/G}\ar[r]\ar[d] & i^*p^*(\Omega^1_W)^G\ar[r]\ar[d] & i^*p^*C_G(W)\ar[r]\ar[d] & 0\\
        i^*\Omega^1_{V/G}\ar[r]\ar[d] & i^*(\Omega^1_V)^G\ar[r]\ar[d] & i^*C_G(V)\ar[r]\ar[d] & 0\\
        \Omega^1_{W/G}\ar[r] & (\Omega^1_W)^G\ar[r] & C_G(W)\ar[r] & 0
    }
\]
Since $i^*p^*=\id$, the composition of the vertical arrows in any column is the identity map, and hence, $C_G(W)=i^*p^*C_G(W)\to i^*C_G(V)$ is injective. Thus, if $C_G(W)\neq0$, then $i^*C_G(V)\neq0$. Hence, $C_G(V)\neq0$.
\end{proof}

\begin{lemma}[Induction]\label{lem:induction}
Let $H\leq G$, let $W$ be an $H$-representation, and let
$V=\operatorname{Ind}_H^G W$.  If $C_H(W)\neq0$, then $C_G(V)\neq0$.
\end{lemma}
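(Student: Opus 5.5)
The approach is to realize $V/G$ geometrically as a quotient of a space built from $W$, so that the cotorsion of $W/H$ appears as a restriction of the cotorsion of $V/G$, in the spirit of \autoref{lem:summand}. Write $V=\Ind_H^GW=\bigoplus_{gH\in G/H} gW$, and let $N=\bigoplus_{gH\neq H} gW$, so that $V=W\oplus N$ as $H$-representations. There is a natural finite map
\[
q\colon V/H\longrightarrow V/G .
\]
The key observation is that the restriction of $q$ to the locus where the $N$-coordinates are "generic" is étale. More precisely, let $U\subset V$ be the open set of vectors $v=(v_{gH})$ such that $v_{gH}\neq 0$ for $gH\neq H$ and the components are "separated", i.e.\ the stabilizer in $G$ of $v$ is contained in $H$ and $gU\cap U$ behaves like cosets. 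I would instead use the cleaner choice of points $w+n$ with $w\in W$ arbitrary and $n\in N$ a point whose $G$-stabilizer equals $\mathrm{Stab}_H(n)$, and whose $G$-orbit meets $W\oplus N$ only in $H$-translates. This is achieved by choosing $n$ with components $n_{gH}$ that are nonzero and pairwise "distinguishable". At such points $q$ is étale. Then \autoref{lemma:surj-etale-local} transfers the cotorsion question from $V/G$ to $V/H$ near the image of $w+n$.

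Having reduced to $V/H=(W\oplus N)/H$, I apply \autoref{lem:summand} with $Z=N$. It says $C_H(V)\neq 0$, but I need nonvanishing near a point of the form $[0+n]$ with $n$ generic, not just near the origin. So I redo the argument of \autoref{lem:summand} using the slice $W\times\{n\}$. Let $K=\mathrm{Stab}_H(n)$, which is a subgroup of $H$. Near $[n]$, Luna's slice theorem, or just étaleness of $(W\oplus N)/K\to (W\oplus N)/H$ at $n$, identifies $V/H$ étale locally with $(W\oplus N)/K$ at $n$. Since $K$ fixes $n$, this is in turn $W/K\times(\text{smooth})$ near $n$.

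The obstacle is that $K$ may be smaller than $H$, which would lose the cotorsion of $W/H$. The fix is to choose $n$ inside $N^H$. This subspace is nonzero only if $N$ contains trivial $H$-summands, which fails in general, so that route does not work directly. Instead, I would take $w$ in $W$ and $n=0$, and look at $q$ near the origin. There $q$ is not étale, so the étale reduction breaks down.

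Instead I would argue algebraically. Pull the cotorsion sequence of $V/G$ back along the inclusion $i\colon W/H\to V/G$ induced by $W\hookrightarrow V$, and compare with a retraction. The map $V\to W$ given by projection onto the identity-coset component is $H$-equivariant but not $G$-equivariant, so it does not descend to $V/G$.

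The replacement for the retraction is the transfer, or norm. A $G$-invariant form on $V$ restricts along $W\hookrightarrow V$ to an $H$-invariant form on $W$. Conversely, given an $H$-invariant form $\omega$ on $W$, pull it back to $V$ by the projection $p_1$ and average over $G/H$ to get $\tilde\omega=\sum_{g\in G/H} g^*p_1^*\omega$. The restriction $i^*\tilde\omega$ equals $\omega$ plus terms coming from $g\neq 1$. Those terms involve $p_1\circ g|_W$, which is zero because $gW$ lies in other components. They therefore vanish, provided $\omega$ has no constant part. Since forms on $W$ vanish at the origin in degree $>0$, we can take the homogeneous positive-degree part.

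The same averaging applies to functions, giving a morphism of sheaves $s\colon i^*p^*$-type data satisfying $i^*\circ s=\id$ on both $\Omega^1$ and $\Omega^{[1]}$, compatibly with $d\pi$. I expect the main work to be checking this compatibility on Kähler differentials. The point is that $d(\text{averaged }f)$ restricts to $df$, since functions of degree $\geq 1$ average correctly.

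The diagram chase of \autoref{lem:summand} then shows that $C_H(W)$ injects into $i^*C_G(V)$. Hence $C_G(V)\neq0$.
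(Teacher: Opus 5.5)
Your key construction is the same as the paper's. For $\omega\in(\Omega^1_{\CC[W]})^H$, the transfer $\tilde\omega=\sum_{gH\in G/H}g\cdot p_1^*\omega$ (with $g$ acting on forms by pullback along $g^{-1}$, so the sum does not depend on coset representatives) is $G$-invariant. It satisfies $i^*\tilde\omega=\omega$: for $g\notin H$ the composite $W\hookrightarrow V\xrightarrow{g^{-1}}V\xrightarrow{p_1}W$ is the zero map, and the pullback of any $1$-form along a constant map vanishes. Your caveat that $\omega$ have ``no constant part'' is therefore unnecessary. The claim that positive-degree forms vanish at the origin is false ($dx$ does not), so drop it. The \'etale and slice attempts in the first half are abandoned and play no role.

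The genuine gap is the last step. There is no natural map $C_H(W)\to i^*C_G(V)$. To get one you propose a section $s$, compatible with $d\pi$ on K\"ahler differentials, built by averaging functions. But $b\mapsto\tilde b$ is additive, not multiplicative, so it comes from no morphism $V/G\to W/H$ and does not induce a map $\Omega^1_{W/H}\to i^*\Omega^1_{V/G}$. Concretely:
\begin{enumerate}
\item The transfer of $a\,db$ is $\sum_{gH}(g\cdot p_1^*a)\,d(g\cdot p_1^*b)$, not $\tilde a\,d\tilde b$.
\item The naive rule $s(a\,db)=a\otimes d\tilde b$ violates the Leibniz rule by the term $d(\widetilde{bc}-\tilde b\tilde c)$.
\item Here $\widetilde{bc}-\tilde b\tilde c$ lies in the kernel $I$ of $\CC[V]^G\to\CC[W]^H$, and $d$ of an element of $I$ need not vanish in $i^*\Omega^1_{V/G}$.
\end{enumerate}
You flag this compatibility as ``the main work'' but never establish it, so the claimed injection $C_H(W)\hookrightarrow i^*C_G(V)$ is unproved.

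The injection is also unnecessary. Restriction along $i$ is an honest morphism from the pulled-back cotorsion sequence of $V/G$ to that of $W/H$. Your $\tilde\omega$ shows that its middle arrow $i^*((\pi_V)_*\Omega^1_V)^G\to((\pi_W)_*\Omega^1_W)^H$ is surjective. Since $((\pi_W)_*\Omega^1_W)^H\to C_H(W)$ is onto and the right-hand square commutes, $i^*C_G(V)\to C_H(W)$ is surjective. Hence $i^*C_G(V)\neq0$, and so $C_G(V)\neq0$. This surjectivity argument needs no compatibility with K\"ahler differentials of the quotients, and it is exactly the paper's proof.
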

\begin{proof}
    The inclusion $W\subset V$ is $H$-equivariant, so we have an induced map $W/H\to V/G$ making the diagram
    \[
        \xymatrix{
            W\ar[d]_-{\pi_W}\ar[r] & V\ar[d]^-{\pi_V}\\
            W/H\ar[r]^-{i} & V/G
        }
    \]
    commute. We then obtain a commutative diagram where the rows are exact
    \[
        \xymatrix{
            i^*\Omega^1_{V/G}\ar[r]\ar[d] & i^*((\pi_V)_*\Omega^1_V)^G\ar[d]^-{\alpha}\ar[r] & i^*C_G(V)\ar[r]\ar[d] & 0\\
            \Omega^1_{W/H}\ar[r] & ((\pi_W)_*\Omega^1_W)^H\ar[r] & C_H(W)\ar[r] & 0
        }
    \]
    Upon showing $\alpha$ is surjective, we are done; indeed, we would then have $i^*C_G(V)\to C_H(W)$ is surjective so $i^*C_G(V)\neq0$ and hence $C_G(V)\neq0$.

    To prove $\alpha$ is surjective, it suffices to show $(\Omega^1_{k[V]})^G\to(\Omega^1_{k[W]})^H$ is surjective. Let $V=\bigoplus_{gH\in G/H}gW$, $p_{gH}\colon V\to gW$ be the projection, and $t_{gH}\colon W\xrightarrow{\simeq} gW$ be the isomorphism given by $t_{gH}(w)=gw$. If $\eta\in(\Omega^1_{k[W]})^H$, let $\eta_{gH}:=p_{gH}^*(t_{gH}^{-1})^*\eta$ and let $\eta':=\sum_{gH\in G/H}\eta_{gH}$. Then $\eta'\in (\Omega^1_{k[V]})^G$ and we see $i^*\eta'=\eta$. Indeed, $i^*p_{H}^*=\id$ and $i^*p_{gH}^*=0$ for $gH\neq H$. This proves surjectivity of $(\Omega^1_{k[V]})^G\to(\Omega^1_{k[W]})^H$, hence also of $\alpha$.
\end{proof}

\begin{lemma}[Reduction to the irreducible case]\label{lem:general->irrep}
Let $G\neq1$ and let $V$ be a fixed-point free $G$-representation with $\dim V\geq2$. If $C_G(V)=0$, then there is an irreducible fixed-point free $G$-subrepresentation $W\subset V$ such that $\dim W\geq2$ and $C_G(W)=0$.
\end{lemma}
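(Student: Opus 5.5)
We argue by contraposition using \autoref{lem:summand}. Suppose $C_G(V)=0$. Decompose $V=W_1\oplus\cdots\oplus W_k$ into irreducible $G$-subrepresentations, which is possible by Maschke's theorem. Each $W_i$ is fixed-point free, since a nonzero vector of $W_i$ fixed by $g\neq1$ would be a nonzero vector of $V$ fixed by $g$. For each $i$ we have $V=W_i\oplus Z_i$ with $Z_i=\bigoplus_{j\neq i}W_j$. \autoref{lem:summand} says that $C_G(W_i)\neq0$ would force $C_G(V)\neq0$. Hence $C_G(W_i)=0$ for every $i$. It therefore remains to find one $W_i$ with $\dim W_i\geq2$; we will produce it, or reach a contradiction, using \autoref{fixed-pt-free->singular-quotient} and \autoref{lem:surface}.

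If some $W_i$ has dimension at least $2$, we are done. Otherwise every $W_i$ is one-dimensional, so $G$ acts through characters $\chi_1,\dots,\chi_k$. Since $V$ is faithful, $G$ embeds in $(\CC^\times)^k$ and is abelian. Fixed-point freeness of $W_1$ means $\chi_1(g)\neq1$ for all $g\neq1$, so $\chi_1$ is injective and $G\simeq\mu_n$ is cyclic with $n>1$ because $G\neq1$. Every $\chi_i$ is injective for the same reason, so each $\chi_i$ is a generator of the character group. Since $\dim V\geq2$, we have $k\geq2$.

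Now take $U=W_1\oplus W_2$, which is a direct summand of $V$. It is a two-dimensional fixed-point free representation: if $g\neq1$ then $g$ acts by $\chi_1(g)\neq1$ and $\chi_2(g)\neq1$, so $g$ fixes no nonzero vector of $U$. It is nontrivial since $G\neq1$. Then \autoref{lem:surface} gives $C_G(U)\neq0$, and \autoref{lem:summand} gives $C_G(V)\neq0$, a contradiction. So the all-one-dimensional case cannot occur, and some irreducible $W_i$ with $\dim W_i\geq2$ and $C_G(W_i)=0$ exists.

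The only delicate point is this one-dimensional case: direct summands there are not irreducible of dimension $\geq2$, so we must use Jamet's surface result on a reducible two-dimensional summand. Alternatively, Fogarty's abelian case would apply directly. The remaining steps are formal consequences of the earlier lemmas.
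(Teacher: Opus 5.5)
Your proof is correct and follows the paper's argument essentially verbatim: decompose $V$ into irreducible (automatically fixed-point free) summands, use \autoref{lem:summand} to get $C_G(W_i)=0$ for each summand, and rule out the all-one-dimensional case by applying \autoref{lem:surface} to $W_1\oplus W_2$. Your extra observations, that $G$ is then cyclic and the announced use of the lemma on smooth fixed-point free quotients, are harmless but not needed.
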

\begin{proof}
    We may write $V=\bigoplus_{i=1}^r V_i$ with each $V_i$ an irreducible $G$-representation. Since $V$ is fixed-point free, each $V_i$ is as well. We claim some $\dim V_i>1$. If instead $\dim V_i=1$ for all $i$, then since $\dim V\geq2$, we see $r\geq2$. Then \autoref{lem:summand} shows $C_G(V_1\oplus V_2)=0$, however this contradicts \autoref{lem:surface}. Having now shown $\dim V_j\geq2$ for some $j$, we see from \autoref{lem:summand} that $C_G(V_j)=0$.
\end{proof}

As mentioned earlier, the Zassenhaus--Vincent--Wolf classification gives six types of groups and lists all their irreducible fixed-point free representations; we use the notation of Theorems~3.1, 3.2 and the list preceding Theorem~3.6 in \cite{Stepanov}.

\begin{lemma}\label{lem:TypeI-induction}
Following the notation in \cite[p.~822]{Stepanov}, let $K=\langle A,B\rangle$ be a group of Type I and let $V=\pi_{k,l}$ be an irreducible fixed-point free $K$-representation of dimension $d$. Then
\begin{enumerate}
    \item\label{typeIcharacter} There is a subgroup $K_0\subseteq K$ and a character $\chi$ of $K_0$ such that $V\simeq\Ind_{K_0}^K\chi$.

    \item\label{typeIdimp} If $d\geq2$ and $p$ is a prime factor of $d$, then there is a subgroup $K_p$ for which $K_0\subset K_p\subset K$ and an irreducible fixed-point free $p$-dimensional representation $W_p$ of $K_p$ such that $V\simeq\Ind_{K_p}^K W_p$. Moreover, if $p$ is odd, then $W_p$ is Type I.

    \item\label{boxtimes->2dim} Let $H$ be a finite group and let $\tau$ be a $2$-dimensional $H$-representation. If $V\boxtimes\tau$ is a fixed-point free representation of $K\times H$, then it is of the form $\Ind_{K_0\times H}^{K\times H} W$ where $W$ is a $2$-dimensional fixed-point free representation.
\end{enumerate}
\end{lemma}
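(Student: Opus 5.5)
The plan is to work directly with Stepanov's explicit description of Type I groups and their representations. Recall that $K=\langle A,B\mid A^m=B^n=1,\ BAB^{-1}=A^r\rangle$, where $\gcd((r-1)n,m)=1$, $d$ is the order of $r$ modulo $m$, and every prime divisor of $d$ divides $n/d$. The representation $\pi_{k,l}$ is given by explicit matrices: $A$ acts diagonally with eigenvalues $\zeta_m^{kr^i}$ for $0\le i<d$, and $B$ acts as a cyclic permutation matrix twisted so that $B^d$ acts by a scalar.

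For (\ref{typeIcharacter}), take $K_0=\langle A,B^d\rangle$. It is abelian, since $B^d$ commutes with $A$ because $r^d\equiv1\pmod m$, and it has index $d$ in $K$. Let $e_0$ be the first basis vector. The line $\CC e_0$ is $K_0$-stable, on which $K_0$ acts by a character $\chi$ with $A\mapsto\zeta_m^k$ and $B^d$ acting by the prescribed scalar. The vectors $B^ie_0$ for $0\le i<d$ form a basis of $V$. Hence the natural map $\Ind_{K_0}^K\chi\to V$ is surjective, and it is an isomorphism by dimension count.

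For (\ref{typeIdimp}), let $p\mid d$ and set $K_p=\langle A,B^{d/p}\rangle$, so that $K_0\subset K_p\subset K$ with $[K_p:K_0]=p$. By transitivity of induction, $V\simeq\Ind_{K_p}^K W_p$ with $W_p:=\Ind_{K_0}^{K_p}\chi$, which has dimension $p$. By \autoref{l:induction-properties}, $W_p$ is irreducible and fixed-point free. The remaining claim, that $K_p$ (with $W_p$) is of Type I when $p$ is odd, is the step I expect to be the main obstacle.

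To handle it, I would put $B'=B^{d/p}$, $r'=r^{d/p}$ and $n'=np/d$, so that $B'AB'^{-1}=A^{r'}$ and $r'$ has order $p$ modulo $m$. The condition $\gcd((r'-1)n',m)=1$ can fail, since $r'-1$ may share factors with $m$. I would repair this by factoring $m=m_1m_2$ with $\gcd(m_1,m_2)=1$, where $r'\equiv1\pmod{m_2}$ and $r'-1$ is a unit modulo $m_1$. Then $A^{m_1}$ is central, and its order is coprime to that of $B'$ modulo relevant factors, so I would absorb it into the cyclic part. This gives a new presentation of $K_p$ with parameters $(m_1,\,n'm_2,\,r')$.

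One then checks the Type I conditions for these parameters. The divisibility condition is that $p$ divides $n'm_2/p$; this holds because $n'/p=n/d$ is divisible by $p$. The coprimality condition uses $\gcd(n,m)=1$. Finally, one identifies $W_p$ with the corresponding $\pi_{k',l'}$ by comparing eigenvalues of the generators. The parity hypothesis should enter because the Type I conditions in Stepanov's convention, or their compatibility with the other types when $p=2$, differ in the even case; I would verify this against Stepanov's table.

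For (\ref{boxtimes->2dim}), use that external tensor product commutes with induction: $\Ind_{K_0}^K\chi\boxtimes\tau\simeq\Ind_{K_0\times H}^{K\times H}(\chi\boxtimes\tau)$. This can be checked by Frobenius reciprocity, or directly on the basis $B^ie_0\otimes t$. Set $W=\chi\boxtimes\tau$, which is $2$-dimensional. It is fixed-point free by \autoref{l:induction-properties}(\ref{fpfInd}) applied to the fixed-point free representation $V\boxtimes\tau$.
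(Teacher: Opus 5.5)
Your argument is correct. For parts (\ref{typeIcharacter}) and (\ref{boxtimes->2dim}) it is the paper's argument. There are two minor differences: you get surjectivity of $\Ind_{K_0}^K\chi\to V$ from the spanning set $B^ie_0$ instead of from irreducibility, and you cite \autoref{l:induction-properties}(\ref{fpfInd}) instead of repeating its proof.

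The genuine difference is the Type I claim in (\ref{typeIdimp}). The paper does not re-present $K_p$ at all. It notes that $W_p$ is an odd-dimensional irreducible fixed-point free representation and invokes Stepanov's Theorem 3.6, which forces such a representation to be of Type I. That citation is the only reason ``$p$ odd'' appears in the statement. Your explicit presentation of $K_p$ with parameters $(m_1,n'm_2,r')$ is more self-contained. It also never uses that $p$ is odd, so it shows $W_2$ is Type I as well; your guess that parity must enter somewhere is unfounded.

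To make your route airtight, a few points should be spelled out.
\begin{enumerate}
\item Why the splitting $m=m_1m_2$ exists. From $p\mid d\mid n$ and $\gcd(m,n)=1$ you get $p\nmid m$. So if $q^a$ exactly divides $m$ and $r'\equiv1\pmod q$, then $r'$, whose order divides $p$, lies in the $q$-group $\ker\bigl((\ZZ/q^a)^*\to(\ZZ/q)^*\bigr)$. Hence $r'\equiv1\pmod{q^a}$.
\item $m_1>1$, since otherwise $r^{d/p}\equiv1\pmod m$. Hence $r'$ has order exactly $p$ modulo $m_1$.
\item The identification of $W_p$. Set $A_1=A^{m_2}$ and $B_1=B'A^{m_1}$. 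Then $\langle A_1,B_1^p\rangle=\langle A,B^d\rangle=K_0$, $\chi(A_1)$ is a primitive $m_1$-th root of unity, and $\chi(B_1^p)$ is a primitive $(n/d)m_2$-th root of unity. By part (\ref{typeIcharacter}) applied to the new presentation, this identifies $W_p=\Ind_{K_0}^{K_p}\chi$ with some $\pi_{k',l'}$.
\end{enumerate}
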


\begin{proof}
Let $K_0:=\langle A,B^d\rangle$. Since $r^d\equiv1\pmod m$, we see $A$ and $B^d$ commute. Let $e_0,\dots,e_{d-1}$ be the basis used to write the matrices for $\pi_{k,l}$ in \cite[p.~822]{Stepanov}. Note that $K_0\subset K$ preserves the line $L=\CC e_0$ and hence we obtain a character $\chi:=\chi_{k,l}\colon K_0\to\CC^*$ given by
\[
    \chi(A)=e^{2\pi i k/m} \quad\textrm{and}\quad \chi(B^d)=e^{2\pi i ld/n}.
\]
Furthermore by Frobenius reciprocity, the inclusion of $K_0$-representations $L\subset\Res^K_{K_0}V$ yields a map $\Ind_{K_0}^K\chi\to V$ of $K$-representations. Since $V$ is irreducible, the map is surjective. Since both representations have the same dimension, the map is therefore an isomorphism, thereby proving (\ref{typeIcharacter}).

We next turn to (\ref{typeIdimp}). Let $s=d/p$ and $K_p:=\langle A,B^s\rangle$. Since $B^d=(B^s)^p$, we see then that $K_0\subseteq K_p\subseteq K$. Note that $K_0$ is normal in $K_p$ of index $p$, so we see $W_p:=\Ind_{K_0}^{K_p}\chi$ has dimension $p$ and 
\[
    V\simeq\Ind_{K_0}^K\chi\simeq\Ind_{K_p}^K W_p.
\]
Note that $W_p$ is irreducible and fixed-point free by \autoref{l:induction-properties}. Finally, if $p$ is odd, then since $W_p$ is an odd-dimensional irreducible fixed-point free representation, it follows from \cite[Theorem 3.6]{Stepanov} that it must be of Type I.

Lastly, to prove (\ref{boxtimes->2dim}), using (\ref{typeIcharacter}) we may write $V\simeq\Ind_{K_0}^K\chi$ for a character $\chi$. Then we see
\[
    V\boxtimes\tau \simeq (\Ind_{K_0}^K\chi)\boxtimes\tau \simeq \Ind_{K_0\times H}^{K\times H}(\chi\boxtimes\tau).
\]
We see $\chi\boxtimes\tau$ is a $2$-dimensional representation. It remains to show that $\chi\boxtimes\tau$ is fixed-point free. Since $\chi\boxtimes\tau$ is a $(K_0\times H)$-stable subspace of $\Res^{K\times H}_{K_0\times H}(V\boxtimes\tau)$, if an element of $K_0\times H$ fixed a non-zero vector of $\chi\boxtimes\tau$, it would also fix a nonzero vector of $V\boxtimes\tau$; this is a contradiction since $V\boxtimes\tau$ is fixed-point free. Hence, $\chi\boxtimes\tau$ is fixed-point free.
\end{proof}

The following is the key result allowing us to reduce to Type I representations.

\begin{proposition}\label{prop:ZVW->TypeI}
    If $V$ is an irreducible fixed-point free $G$-representation with $\dim V\geq2$, then there is a subgroup $H\subset G$ and an irreducible fixed-point free $H$-representation $W$ such that $V=\Ind^G_H W$ and either $\dim W=2$ or $W$ is a prime-dimensional Type I representation.
\end{proposition}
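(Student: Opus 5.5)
We will go through the six families of the Zassenhaus--Vincent--Wolf classification as listed in \cite[Theorems~3.1, 3.2 and 3.6, Table~2]{Stepanov}. In each case we exhibit the irreducible fixed-point free representation $V$ as an induced representation from a subgroup. Throughout, transitivity of induction together with \autoref{l:induction-properties} guarantees that whatever intermediate representation we induce from is automatically irreducible and fixed-point free. So the only thing to arrange is that the final inducing representation $W$ has dimension $2$, or is prime-dimensional of Type~I.

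\textbf{Type~I.} Here $G=K=\langle A,B\rangle$ and $V=\pi_{k,l}$ has dimension $d\geq 2$. We pick a prime $p\mid d$ and apply \autoref{lem:TypeI-induction}(\ref{typeIdimp}), which gives $V\simeq \Ind_{K_p}^K W_p$ with $\dim W_p=p$. If $p$ is odd, $W_p$ is of Type~I by that same lemma. If $p=2$, then $\dim W_p=2$. Either way we are done.

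\textbf{Types II--VI.} According to Stepanov's table, in these families the irreducible fixed-point free representations are built from a Type~I representation $\pi_{k,l}$ of $K$ together with a $2$-dimensional representation $\tau$ of a binary polyhedral (or $SL_2(\mathbb F_p)$-type) group $H$. They take one of three forms:
\begin{itemize}
\item[(i)] a tensor product $\pi_{k,l}\boxtimes\tau$ of $K\times H$;
\item[(ii)] a representation induced from such a tensor product, from an index-$2$ subgroup of $G$;
\item[(iii)] in Type~II, a representation induced from a Type~I subgroup of index $2$.
\end{itemize}
In case (i), \autoref{lem:TypeI-induction}(\ref{boxtimes->2dim}) writes $\pi_{k,l}\boxtimes\tau\simeq\Ind_{K_0\times H}^{K\times H}W$ with $W$ a $2$-dimensional fixed-point free representation. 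In case (ii), we compose this with the outer induction. In case (iii), we apply the Type~I argument above to the inducing representation and again use transitivity of induction.

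One subcase needs separate attention: when the $K$-factor is trivial, so that $V=\tau$ is itself $2$-dimensional. Then we simply take $H=G$ and $W=V$.

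The main obstacle is the case-by-case bookkeeping for Types II--VI. For each family we must check, in Stepanov's explicit description, that the representation really is $\Ind$ of a tensor product $\pi_{k,l}\boxtimes\tau$ with $\dim\tau=2$, or is induced from a Type~I subgroup, and that the subgroup and tensor decomposition are compatible with \autoref{lem:TypeI-induction}(\ref{boxtimes->2dim}). Types~IV and VI, where the induction is from an index-$2$ subgroup containing $K\times H$, require the most care. There we will verify directly from the generators listed in \cite[p.~822]{Stepanov} that the representation restricted to the index-$2$ subgroup contains a summand of the form $\pi_{k,l}\boxtimes\tau$. Frobenius reciprocity, exactly as in the proof of \autoref{lem:TypeI-induction}(\ref{typeIcharacter}), then gives a nonzero, hence surjective, hence (by dimension count) bijective map from the induced representation to $V$.
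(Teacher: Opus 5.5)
\textbf{There is a gap: your trichotomy (i)--(iii) for Types~II--VI is false, and it misses a family.} Your overall plan is the paper's: Type~I via \autoref{lem:TypeI-induction}(\ref{typeIdimp}), products with a $2$-dimensional factor via \autoref{lem:TypeI-induction}(\ref{boxtimes->2dim}), and transitivity of induction for the rest. But this proposition \emph{is} the case check, so the list of forms has to be complete.

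The missed family is Type~III with $3\mid d$. There $G=\langle A,B,P,Q\rangle$ is not a product $K\times H$ of a Type~I group with a binary polyhedral group. The $3$-part of $\langle B\rangle$ acts with order $3$ on $\langle P,Q\rangle\cong Q_8$, and when $3\mid d$ it also acts nontrivially on $\langle A\rangle$. So no binary tetrahedral factor splits off (as it does when $3\nmid d$), and $V$ is not of the form $\pi_{k,l}\boxtimes\tau$.

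The observation you are missing is this. Since $3\mid d$, $B^d$ centralizes $\langle P,Q\rangle$, and it commutes with $A$. So $K=\langle A,B^d,P,Q\rangle$ is a normal subgroup of index $d$ (not $2$). Then $V\simeq\Ind_K^G W$, and $\dim W=2$ because $\dim V=2d$.

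Because this case is absent, Type~IV Case~3 is also uncovered. Its representations are induced from exactly these Type~III representations, not from a tensor product $\pi_{k,l}\boxtimes\tau$, so your form (ii) does not describe them. Your Frobenius-reciprocity device can repair this. You must apply it to the index-$d$ subgroup $K$ and a $2$-dimensional $K$-subrepresentation of $\Res^G_K V$. The index-$2$ subgroup containing some $K\times H$ that you propose is the wrong subgroup here.

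Two smaller slips:
\begin{enumerate}
\item In Type~IV Subcase~2(a), $V=\pi_{k,l}\boxtimes o_j$ where $o_j\simeq\Ind_{T_v^*}^{O_v^*}\tau_j$ is $4$-dimensional. So \autoref{lem:TypeI-induction}(\ref{boxtimes->2dim}) does not apply directly. You must first rewrite $V\simeq\Ind_{K\times T_v^*}^{K\times O_v^*}(\pi_{k,l}\boxtimes\tau_j)$. Your restriction check would detect this, but your form (i) with $\dim\tau=2$ is not what the table gives.
\item In Type~II with $d=1$, the inducing representation $\pi_{k',l'}$ is $1$-dimensional. There is then no prime $p\mid d$, so ``apply the Type~I argument'' fails. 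In that case $\dim V=2$ and you take $H=G$, $W=V$.
\end{enumerate}
By contrast, the trivial-$K$-factor subcase you single out needs no separate treatment, since \autoref{lem:TypeI-induction}(\ref{boxtimes->2dim}) already covers it.
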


\begin{proof}
We follow the notation on p.~822--826, Theorem 3.6, and Table 2 of \cite{Stepanov}. We successively run through Type I--VI representations and prove the desired result by induction. Note that by \autoref{l:induction-properties}, upon showing $V=\Ind_H^GW$, we automatically know $W$ is irreducible and fixed-point free.

\vspace{1em}
\noindent\textbf{Type I.} By \autoref{lem:TypeI-induction}(\ref{typeIdimp}), we may choose $H=K_p$ and $W=W_p$.

\vspace{1em}
\noindent\textbf{Type II.}
Following the notation on \cite[p.~822]{Stepanov}, an irreducible fixed-point free representation $V$ of Type II is given by $\alpha_{k',l'}$, which has dimension $2d$ and is of the form $\Ind_{\langle A,B\rangle}^G\pi_{k',l'}$ with $\pi_{k',l'}$ a $d$-dimensional representation. If $d=1$, then $\dim V=2$ so we may choose $H=G$ and $W=V$; otherwise, we are reduced to the case of Type I representations of dimension at least $2$.

\vspace{1em}
\noindent\textbf{Type III.}
There are 3 different subcases we must consider. First, if $9\nmid n$, then $G=\langle A,B^3\rangle\times T^*$ and $V=\pi_{k,l}\boxtimes\tau$, where $\langle A,B^3\rangle$ is of Type I, $T^*$ is the binary tetrahedral group, and $\tau$ is the unique irreducible fixed-point free representation of $T^*$. By \cite[Lemma~2.15]{Stepanov}, $\dim\tau=2$. Therefore, by \autoref{lem:TypeI-induction}(\ref{boxtimes->2dim}), $V$ has the desired form.

Next, if $9\mid n$ and $3\nmid d$, then $G=\langle A,B^{3^v}\rangle\times T_v^*$ and $V=\pi_{k,l}\boxtimes\tau_j$ with $\dim\tau_j=2$ by \cite[Lemma~2.15]{Stepanov}. So, by \autoref{lem:TypeI-induction}(\ref{boxtimes->2dim}), $V$ again has the desired form.

Lastly, if $3\mid d$, then $K=\langle A,B^d,P,Q\rangle$ is a normal subgroup of index $d$ and $V=\Ind_K^G W$. Since $\dim V=2d$, we see $\dim W=2$.

\vspace{1em}
\noindent
\textbf{Type IV.} 
There are again several cases we must consider. Using the notation on \cite[p.~823]{Stepanov}, let $G':=\langle A,B,P,Q\rangle$, which is a Type III subgroup of $G$. 

We consider Case 1, which assumes $9\nmid n$. In Subcase 1(a), $G=\langle A,B^3\rangle\times O^*$ and $V=\pi_{k,l}\boxtimes o_j$, where $\pi_{k,l}$ is Type I and $o_j$ is an irreducible fixed-point free representation of the binary octahedral group $O^*$. By \cite[Lemma~2.16]{Stepanov}, $\dim o_j=2$. By \autoref{lem:TypeI-induction}(\ref{boxtimes->2dim}), $V$ has the form we want.

In Subcase 1(b), our representation $\gamma_{k',l'}$ is $4d$-dimensional and is induced from the $2d$-dimensional Type III representation $\nu_{k',l'}$ of $G'$. By the Type III case, we know then that $\nu_{k',l'}\simeq\Ind_H^{G'}W$ with $W$ a $2$-dimensional representation. So, $\gamma_{k',l'}\simeq\Ind_{G'}^G\nu_{k',l'}\simeq\Ind_H^GW$.

We now turn to Case 2, which assumes $9\mid n$ and $3\nmid d$. In Subcase 2(a), $G=\langle A,B^{3^v}\rangle\times O_v^*$ and $V=\pi_{k,l}\boxtimes o_j$. By \cite[Lemma 2.16]{Stepanov}, $o_j\simeq\Ind_{T_v^*}^{O_v^*}\tau_j$, where $\tau_j$ is $2$-dimensional. Letting $K=\langle A,B^{3^v}\rangle$ and $\pi_{k,l}=\Ind_{K_0}^K\chi$ as in \autoref{lem:TypeI-induction}(\ref{typeIcharacter}), we see
\[
    \pi_{k,l}\boxtimes o_j\simeq\Ind_{K\times T_v^*}^{K\times O_v^*}(\pi_{k,l}\boxtimes\tau_j)\simeq\Ind_{K_0\times T_v^*}^G(\chi\boxtimes\tau_j).
\]
Then $\chi\boxtimes\tau_j$ is $2$-dimensional.

Next, in Subcase 2(b), our representation $\gamma_{k',l',j}$ is induced from the Type III representation $\nu_{k',l',j}$ of $G'$, so the Type III case and induction in stages reduce us to a $2$-dimensional fixed-point free representation.

Lastly, Case 3 assumes $3\mid d$. Here the representation $\eta_{k',l'}$ is induced from the Type III representation $\mu_{k',l'}$ of $G'$. The Type III case then again reduces this to a $2$-dimensional fixed-point free representation.

\vspace{1em}
\noindent\textbf{Type V.}
By Theorem~3.2 and p.~826 of \cite{Stepanov}, $G=K\times I^*$ where $K$ is of Type I, and $V=\pi_{k,l}\boxtimes\iota_j$. By \cite[Lemma~2.17]{Stepanov}, the irreducible fixed-point free representation $\iota_j$ of the binary icosahedral group $I^*$ has dimension $2$. Then by \autoref{lem:TypeI-induction}(\ref{boxtimes->2dim}), $V$ has the desired form.

\vspace{1em}
\noindent\textbf{Type VI.}
By \cite[p.~826]{Stepanov}, every Type VI representation $V$ is induced from an irreducible fixed-point free representation of Type V. Thus, we are done by induction.
\end{proof}

\subsection{Proof of \autoref{thm:Fogarty's question}}

Recall that a Type I group is given by
\[
 G=\langle A,B\mid A^m=B^n=1,\ BAB^{-1}=A^r\rangle
\]
where $m,n\geq1$ and
\begin{enumerate}
 \item[(1)] $\gcd(n(r-1),m)=1$,
 \item[(2)] $r^n\equiv1\pmod m$,
 \item[(3)] if $p$ is a prime dividing the order $d$ of $r$ in $(\ZZ/m)^*$, then $p$ divides $n/d$.
\end{enumerate}
The first two conditions are stated in \cite[Table 1]{Stepanov} and the last condition is from \cite[Theorem 3.1]{Stepanov}. The irreducible fixed-point free representations of $G$ are given by
\[
 \pi_{k,l}(A)=
 \begin{pmatrix}
  e^{2\pi i k/m} & 0 & \cdots & 0\\
  0 & e^{2\pi i kr/m} & \cdots & 0\\
  \vdots & \vdots & \ddots & \vdots\\
  0 & 0 & \cdots & e^{2\pi i kr^{d-1}/m}
 \end{pmatrix},\quad\quad
 \pi_{k,l}(B)=
 \begin{pmatrix}
  0 & 1 & \cdots & 0\\
  \vdots & \vdots & \ddots & \vdots\\
  0 & 0 & \cdots & 1\\
  e^{2\pi i ld/n} & 0 & \cdots & 0
 \end{pmatrix}
\]
where $\gcd(k,m)=\gcd(l,n)=1$.

\begin{proposition}\label{prop:typeI-prime-dim}
    Let $V$ be the irreducible fixed-point free representation $\pi_{k,l}$ of Type I. If $\dim V$ is prime, then $C_G(V)\neq0$.
\end{proposition}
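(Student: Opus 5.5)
If $p=\dim V=2$, the claim is \autoref{lem:surface}, so I will assume $p$ is an odd prime. Then the order $d$ of $r$ in $(\ZZ/m)^*$ equals $p$. The plan is to write down an explicit $G$-invariant polynomial $1$-form $\eta$ on $V=\CC^p$ with coordinates $x_0,\dots,x_{p-1}$ (dual to $e_0,\dots,e_{p-1}$). Then I will show that $\eta$ is not in the $\CC[V]^G$-span of $\{dg : g\in\CC[V]^G\}$. Both sides are graded by polynomial degree (giving $dx_j$ degree $1$), so this becomes a statement about one graded piece.

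\textbf{Construction of $\eta$.} Let $P=x_0x_1\cdots x_{p-1}$. Under $A$, $P$ has weight $k(1+r+\cdots+r^{p-1})\equiv0\pmod m$, since $r^p\equiv1$ and $r-1$ is invertible mod $m$ by condition (1). Under $B$, $P$ is multiplied by $\lambda=e^{2\pi i lp/n}$. The forms $\theta_j=(P/x_j)\,dx_j=P\,d\log x_j$ are $A$-invariant, and $B$ permutes them cyclically up to the factor $\lambda$. For a $p$-th root of unity $\varepsilon\neq1$, set
\[
\theta_\varepsilon=\sum_j\varepsilon^{j}\,\theta_j .
\]
Then $B$ acts on $\theta_\varepsilon$ by $\lambda\varepsilon^{\pm1}$, and on $P^s\theta_\varepsilon$ by $\lambda^{s+1}\varepsilon^{\pm1}$. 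Condition (3) with $d=p$ gives $p^2\mid n$, so the order of $\lambda$ is divisible by $p$. Hence there exist $\varepsilon\neq1$ and $s\geq0$ with $\lambda^{s+1}\varepsilon^{\pm1}=1$. Then
\[
\eta:=P^s\theta_\varepsilon
\]
is a $G$-invariant form of degree $p(s+1)$. This is essentially the $\eta$ the disclosure alludes to.

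\textbf{Showing $\eta\notin\operatorname{im}(\Omega^1_{V/G})$.} Suppose $\eta=\sum f_i\,dg_i$ with $f_i,g_i$ homogeneous invariants and $\deg f_i+\deg g_i=p(s+1)$. I would first note that every $A$-invariant monomial $x^a$ has exponents with $\sum a_jr^j\equiv0\pmod m$. The only monomials appearing in $\eta$ are $P^{s}\cdot P/x_j$ times $dx_j$, so only the monomial coefficients of $P^{s+1}$ matter. I then project the equation onto the span of the forms $P^{s+1}\,d\log x_j$, that is, onto monomial forms $x^a\,dx_j$ with $x^a x_j=P^{s+1}$. The contribution of $f\,dg$ to this span comes only from pairs of monomials $x^b$ in $f$ and $x^c$ in $g$ with $x^bx^c=P^{s+1}$. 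Each such contribution is
\[
x^b\,d(x^c)=P^{s+1}\sum_j c_j\,d\log x_j .
\]
Because $g$ is invariant, its monomials dividing $P^{s+1}$ are $A$-invariant. The key claim is that the only $A$-invariant monomials dividing $P^{s+1}$ are powers of $P$ itself. Granting this, every contribution is a multiple of $d\log P=\sum_j d\log x_j$, which is the $\varepsilon=1$ direction. Then $\theta_\varepsilon$ with $\varepsilon\neq1$ cannot be reached, which is a contradiction.

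\textbf{Main obstacle.} The hard step is the key claim: if $0\le c_j\le s+1$ and $\sum_j c_jr^j\equiv0\pmod m$, then all $c_j$ are equal. This can fail if $m$ is small relative to $s$. I would first try to choose $s$ minimal, so $s+1\le\operatorname{ord}(\lambda)$, and exploit condition (1) together with $r$ having order exactly $p$ modulo every prime divisor of $m$, to argue via the cyclotomic-type relation $1+r+\cdots+r^{p-1}\equiv0$. If the claim fails for some parameters, the fallback is to contract with the vector fields $x_j\partial_{x_j}$ and reduce modulo the ideal of non-$P$-power invariants, possibly replacing $\eta$ by a form built from a different $A$-invariant monomial of minimal degree.
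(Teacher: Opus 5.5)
Your construction of $\eta$ matches the paper's: with the minimal choice $s+1=n/p^2$, your $P^s\theta_\varepsilon$ is the paper's form, and the vanishing $\iota_E\theta_\varepsilon=P\sum_j\varepsilon^j=0$ is also the fact the paper uses. The gap is in the non-image step. The key claim you reduce to is false, even for minimal $s$.

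Take $m=7$, $r=2$, $p=3$, $n=45$. Conditions (1)--(3) hold: $\gcd(45,7)=1$, $2^{45}\equiv1\pmod 7$, and $3\mid 15$. The minimal exponent is $s+1=n/p^2=5$. The monomial $x_0^3x_1^2$ divides $P^5$ and has $A$-weight $k(3+2\cdot 2)=7k\equiv0\pmod 7$. So it is $A$-invariant but not a power of $P$. Arguing from $A$-invariance alone cannot close the argument, and the fallback you sketch is not a proof.

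The missing idea is to use $B$ to constrain degrees. We have $\pi_{k,l}(B^p)=e^{2\pi i lp/n}\cdot\id_V$. Since $\gcd(l,n)=1$ and $p\mid n$, this scalar has exact order $n/p$. Hence every nonconstant homogeneous element of $\CC[V]^G$ has degree divisible by $n/p$.

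With $s+1=n/p^2$, $\eta$ has degree exactly $p(s+1)=n/p$. So in any expression $\eta=\sum f_i\,dg_i$ with homogeneous invariants, each term has $g_i$ constant (so $dg_i=0$) or $f_i$ constant. Thus $\eta=dg$ for a single invariant $g$ of degree $n/p$. Contracting with the Euler field gives $0=\iota_E\eta=\iota_E(dg)=(n/p)\,g$, so $g=0$ and $\eta=0$, a contradiction. This is the paper's argument.

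In terms of your projection approach, the degree constraint says the only monomials of an invariant $g$ that can divide $P^{s+1}$ are $1$ and $P^{s+1}$. That restricted statement is exactly what your argument needs, and it holds for degree reasons, not for arithmetic reasons about $r$ modulo $m$. The minimality of $s$ is essential here, since it is what pins $\deg\eta$ to the lowest positive invariant degree. Your separate treatment of $p=2$ via Jamet is fine, though the paper's argument covers $p=2$ uniformly.
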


%


\begin{proof}
Consider the $G$-representation $\pi_{k,l}$ as described above, where the matrices are written with respect to the basis $x_0,\ldots,x_{p-1}$ of $V$. Since every prime factor of the order of $r$ in $(\ZZ/m)^*$ divides $n/p$, we see $s:=n/p^2\in\ZZ$. Let $\zeta:=e^{2\pi il/p}$ and
\[
 \eta=
 \sum_{j=0}^{p-1} \left(\zeta^{-j} \prod_{0\leq i\leq p-1}x_i^s\right)\frac{dx_j}{x_j}.
\]
We claim that $0\neq\eta\in C_G(V)$.

We first show $\eta$ is $G$-invariant. To see this, first recall $\gcd(p(r-1),m)=1$ and so $\gcd(r-1,m)=1$. Then using that $0\equiv r^p-1\equiv (r-1)(1+r+\dots+r^{p-1})\pmod{m}$, we see $1+r+\dots+r^{p-1}\equiv0\pmod{m}$. Hence, letting $\zeta'=e^{2\pi i/m}$, we have
\[
    A\cdot \prod_i x_i=(\zeta')^{k(1+r+\dots+r^{p-1})}\prod_i x_i=\prod_i x_i.
\]
It follows that $A\cdot\eta=\eta$.

To see that $B\cdot\eta=\eta$, let $\beta=e^{2\pi i\ell p/n}$ and observe that $B\cdot \prod_i x_i=\beta\prod_i x_i$. So, $B\cdot\prod_i x_i^s=\zeta\prod_i x_i^s$. Hence,
\[
    B\cdot\eta=\sum_{j\in\ZZ/p} \left(\zeta^{-(j-1)} \prod_{0\leq i\leq p-1}x_i^s\right)\frac{dx_{j-1}}{x_{j-1}} = \eta.
\]

It remains to show $\eta$ is not in the image of $\Omega_{V/G}^1$. For this, let 
\[
 E=\sum_{j=0}^{p-1}x_j\frac{\partial}{\partial x_j}.
\]
Applying the contraction map $\iota_E$ induced by $E$, we have
\[
 \iota_E(\eta)=
 \prod_{i=0}^{p-1}x_i^s\sum_{j=0}^{p-1}\zeta^j=0.
\]
Now suppose $\eta$ were of the form $\sum_i a_i db_i$ with $a_i,b_i\in\CC[V]^G$; we may assume the $a_i$ and $b_i$ are homogeneous. Since $\pi_{k,l}(B^p)=e^{2\pi i lp/n}\id_V$, we see that every homogeneous element $h\in\CC[V]^G$ of degree $D$ satisfies $h=B^p\cdot h=e^{2\pi i lDp/n}h$, and so $n/p$ divides $D$. Since $\eta$ has degree $n/p$, we see $\deg a_i=0$ and $\deg b_i=n/p$, so $\eta=df$ for some homogeneous $f\in(\CC[V]^G)_{n/p}$. As a result,
\[
 0=\iota_E(\eta)=\iota_E(df)=\frac np f.
\]
It follows that $f=0$ and hence $\eta=0$, a contradiction. Thus, $\eta$ represents a nonzero element of $C_G(V)$.
\end{proof}

\begin{proof}[{Proof of \autoref{thm:Fogarty's question}}]
    By \autoref{prop:reduction}, it suffices to show $C_G(V)\neq0$ for every fixed-point free $G$-representation $V$ with $\dim V\geq2$ and $G\neq1$. By \autoref{lem:general->irrep}, we may assume $V$ is an irreducible representation. Then by \autoref{prop:ZVW->TypeI}, \autoref{lem:surface}, and \autoref{lem:induction}, we may assume $V$ is a prime-dimensional Type I representation. Then \autoref{prop:typeI-prime-dim} finishes the proof.
\end{proof}

\section{$K_1$-regularity and affine singular loci:~proof of \autoref{thm:finite-quotient-k1-regular-smooth}}

\begin{lemma}\label{l:C->Q}
Let $X$ be a complex variety with Du Bois singularities. Then the
natural map
\[
\Omega^1_{X/\mathbb C}\longrightarrow
\cH^0\underline{\Omega}^1_{X/\mathbb C}
\]
is surjective if and only if
\[
\Omega^1_{X/\mathbb Q}\longrightarrow
\cH^0\underline{\Omega}^1_{X/\mathbb Q}
\]
is surjective.
\end{lemma}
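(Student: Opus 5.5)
We will compare the two comparison maps through the transitivity sequence for $\mathbb Q\subset\mathbb C\to\mathcal O_X$. Kähler differentials give an exact sequence
\[
\mathcal O_X\otimes_{\mathbb C}\Omega^1_{\mathbb C/\mathbb Q}\longrightarrow\Omega^1_{X/\mathbb Q}\longrightarrow\Omega^1_{X/\mathbb C}\longrightarrow 0.
\]
Since $X$ is flat over $\mathbb C$ and $\mathbb C/\mathbb Q$ is formally smooth (characteristic zero), this sequence is also left exact. We first make this precise at the level of cotangent complexes: the triangle $L_{\mathbb C/\mathbb Q}\otimes\mathcal O_X\to L_{X/\mathbb Q}\to L_{X/\mathbb C}$ holds with $L_{\mathbb C/\mathbb Q}\simeq\Omega^1_{\mathbb C/\mathbb Q}$ concentrated in degree $0$ and free. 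Sheafifying in the cdh topology, i.e.\ applying $Ra_*a^*$, gives a triangle
\[
\mathcal{O}^{\mathrm{cdh}}_X\otimes_{\mathbb C}\Omega^1_{\mathbb C/\mathbb Q}\longrightarrow\underline{\Omega}^1_{X/\mathbb Q}\longrightarrow\underline{\Omega}^1_{X/\mathbb C}.
\]
This holds because on smooth schemes (cdh-locally) the sequence $0\to\mathcal O\otimes\Omega^1_{\mathbb C/\mathbb Q}\to\Omega^1_{/\mathbb Q}\to\Omega^1_{/\mathbb C}\to0$ is exact. Here $\mathcal O^{\mathrm{cdh}}_X=\underline{\Omega}^0_X$. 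Since $\Omega^1_{\mathbb C/\mathbb Q}$ is a (huge) free $\mathbb C$-vector space, tensoring with it commutes with $Ra_*$ on quasi-compact $X$ (it is a filtered colimit of finite sums, and cdh cohomology commutes with filtered colimits on noetherian schemes).

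Next we use the Du Bois hypothesis: $\mathcal O_X\to\underline{\Omega}^0_X$ is an isomorphism, so the left term is just $\mathcal O_X\otimes\Omega^1_{\mathbb C/\mathbb Q}$ sitting in degree $0$. Taking $\mathcal H^0$ and mapping the Kähler sequence into it gives a map of exact sequences
\[
\begin{tikzcd}
0\arrow[r] & \mathcal O_X\otimes\Omega^1_{\mathbb C/\mathbb Q}\arrow[r]\arrow[d,"\simeq"] & \Omega^1_{X/\mathbb Q}\arrow[r]\arrow[d] & \Omega^1_{X/\mathbb C}\arrow[r]\arrow[d] & 0\\
0\arrow[r] & \mathcal O_X\otimes\Omega^1_{\mathbb C/\mathbb Q}\arrow[r] & \mathcal H^0\underline{\Omega}^1_{X/\mathbb Q}\arrow[r] & \mathcal H^0\underline{\Omega}^1_{X/\mathbb C}\arrow[r] & \mathcal H^1(\mathcal O_X\otimes\Omega^1_{\mathbb C/\mathbb Q})=0.
\end{tikzcd}
\]
The bottom row is exact on the right because the left term has no $\mathcal H^1$, again by the Du Bois property. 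The left vertical map is an isomorphism. The five lemma, or a snake-lemma chase, then shows that the middle map is surjective if and only if the right map is surjective, since the cokernels of the middle and right vertical maps are isomorphic.

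The main obstacle is justifying the cdh triangle and the compatibility of the comparison maps with the Kähler sequence. That is, we must identify $Ra_*a^*(\mathcal O_X\otimes_{\mathbb C}\Omega^1_{\mathbb C/\mathbb Q})$ with $\underline{\Omega}^0_X\otimes\Omega^1_{\mathbb C/\mathbb Q}$, and check that cdh-locally the short exact sequence holds. The first try will be to verify exactness on smooth $\mathbb C$-schemes, where it is classical, and then invoke that every scheme is cdh-locally smooth (resolution of singularities in characteristic zero), so that $a^*$ of the Kähler sequence is exact as cdh sheaves. Naturality of $\Omega\to Ra_*a^*\Omega$ then gives the commuting diagram.
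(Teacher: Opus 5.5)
Your proposal is correct and follows essentially the paper's route: map the transitivity sequence for $\mathbb Q\subset\mathbb C\to\mathcal O_X$ into its Du Bois (cdh) analogue, then use $\mathcal O_X\simeq\underline{\Omega}^0_X$ to identify the two cokernels. The paper does this on cones, getting the stronger $C^1_{X/\mathbb Q}\simeq C^1_{X/\mathbb C}$, while you do the equivalent $\mathcal H^0$-level diagram chase. One small correction: formal smoothness of $\mathbb C/\mathbb Q$ is not a valid reason for injectivity of $\mathcal O_X\otimes\Omega^1_{\mathbb C/\mathbb Q}\to\Omega^1_{X/\mathbb Q}$ (the standard criterion needs $X$ smooth over $\mathbb C$), but your chase never uses that injectivity, only right exactness of the top row and exactness of the bottom row.
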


\begin{proof}
We claim that we have the following morphism of exact triangles
\[
\begin{CD}
\Omega^1_{\mathbb C/\mathbb Q}\otimes_{\mathbb C}\mathcal O_X
    @>>> L_{X/\mathbb Q}
    @>>> L_{X/\mathbb C} \\
@VVV @VVV @VVV \\
\Omega^1_{\mathbb C/\mathbb Q}\otimes_{\mathbb C}
    \underline{\Omega}^0_{X/\mathbb C}
    @>>> \underline{\Omega}^1_{X/\mathbb Q}
    @>>> \underline{\Omega}^1_{X/\mathbb C}.
\end{CD}
\]
where the top row is the transitivity triangle for the cotangent complex \cite[Tag~08QX]{stacks-project}. To see that $\Omega_{\mathbb{C}/\mathbb{Q}}^1 = L_{\mathbb{C}/\mathbb{Q}}$, first note that the cotangent complex commutes with filtered colimits by \cite[Tag~08S9]{stacks-project}, so it suffices to show $L_{K/\QQ}=\Omega^1_{K/\QQ}$ for finitely generated subfields $K\subset\CC$. Since the cotangent complex of a finite separable extension is trivial, this reduces us to the case $K=\QQ(x_1,\dots,x_n)$ where this follows from the fact that $L_{\QQ[x_1,\dots,x_n]/\QQ}=\Omega^1_{\QQ[x_1,\dots,x_n]/\QQ}$ and $L_{\QQ(x_1,\dots,x_n)/\QQ}=L_{\QQ[x_1,\dots,x_n]/\QQ}\otimes^\bL \QQ(x_1,\dots,x_n)$ by \cite[Tag 08SF]{stacks-project}.

To obtain the bottom exact triangle, we start with the exact sequence
$$0 \longrightarrow \Omega^1_{\mathbb{C}/\mathbb{Q}} \otimes_{\mathbb{C}} \cO_X \longrightarrow \Omega^1_{X/\mathbb{Q}} \longrightarrow \Omega^1_{X/\mathbb{C}} \longrightarrow 0.$$Taking a hyperresolution $\epsilon: X_{\bullet}\to X$ and applying $R\epsilon_{\bullet *}$ gives a commutative diagram$$\begin{tikzcd} 0 \ar[r] & \Omega^1_{\mathbb{C}/\mathbb{Q}} \otimes_{\mathbb{C}} \cO_X \ar[r] \ar[d] & \Omega^1_{X/\mathbb{Q}} \ar[r] \ar[d] & \Omega^1_{X/\mathbb{C}} \ar[r] \ar[d] & 0 \\ & \Omega^1_{\mathbb{C}/\mathbb{Q}}\otimes_{\mathbb{C}} \underline{\Omega}^0_{X/\mathbb{C}} \ar[r] & \underline{\Omega}^1_{X/\mathbb{Q}} \ar[r] & \underline{\Omega}^1_{X/\mathbb{C}} \end{tikzcd}$$
On the other hand, there are natural maps $L_{X/Y}\to \mathcal{H}^0 L_{X/Y} \cong \Omega_{X/Y}^1$. Composing vertical maps with the map induced on the cotangent complex by taking $\mathcal{H}^0$, we obtain the above commutative diagram.

Then taking cones of the three vertical arrows gives the distinguished triangle
\[
\Omega^1_{\mathbb C/\mathbb Q}\otimes_{\mathbb C}C^0_{X/\mathbb C}
\longrightarrow
C^1_{X/\mathbb Q}
\longrightarrow
C^1_{X/\mathbb C}.
\]
Since $X$ is Du Bois, the natural map
\(
\mathcal O_X\to\underline{\Omega}^0_{X/\mathbb C}
\)
is a quasi-isomorphism, so $C^0_{X/\mathbb C}\simeq0$. So, we see
\[
C^1_{X/\mathbb C}\simeq C^1_{X/\mathbb Q}
\]
and hence, $\cH^0(C^1_{X/\QQ})\simeq\cH^0(C^1_{X/\CC})$. The result then follows from the fact that 
\[
\cH^0(C^1_{X/k})
\simeq
\operatorname{coker}(\Omega^1_{X/k}\longrightarrow\cH^0\underline{\Omega}^1_{X/k})
\]
for $k\subset\CC$ a subfield.
\end{proof}

\begin{proof}[{Proof of \autoref{thm:finite-quotient-k1-regular-smooth}}]
If $X$ is smooth, then it is $K_1$-regular by \cite[\S4 Theorem 3 Corollary 2 and \S7 Proposition 4.1]{Quillen1973}.

Now assume $X$ is singular and recall that $C_{X/\mathbb{Q}}^p:=\Cone(L^p_{X/\mathbb{Q}}\longrightarrow\underline{\Omega}_{X/\mathbb{Q}}^p)$. By \cite[Theorem S]{Kovacs1999}, $X$ is Du Bois. Letting $\cN K_q$ be the Zariski sheaf associated to the local Bass $NK$-groups, we have
\[
    \cN K_q\simeq\bigoplus_{j-i=-q}\cH^j(C_{X/\mathbb{Q}}^i)\otimes_\QQ t\QQ[t].
\]
by \cite{CHWW} (see also \cite[Theorem~2.10]{Shen2025}). Now $\cN K_q$ is a quasi-coherent sheaf supported on $\Sing(X)$, hence on $Z$. Let $\cI_Z$ be the ideal sheaf of $Z$, and write $\cN K_q=\varinjlim_\lambda\cF_\lambda$ as the filtered union of its finite-type quasi-coherent subsheaves \cite[Tag~01PG]{stacks-project}. Since $X$ is noetherian, each $\cF_\lambda$ is coherent, and hence $(\cI_Z)^{N_\lambda}\cF_\lambda=0$ for some $N_\lambda>0$. Thus, $\cF_\lambda$ is the pushforward of a coherent sheaf on the affine infinitesimal neighborhood $Z_{N_\lambda}$ of $Z$, so $H^p(\cF_\lambda)=0$ for $p>0$. Since cohomology commutes with filtered colimits \cite[Tag~01FF]{stacks-project}, we obtain $H^p(\cN K_q)=0$ for all $p>0$. Using the spectral sequence
\[
    E_2^{p,q}=H^p(\cN K_q)\Longrightarrow NK_{q-p}(X)
\]
we see then that
\[
    NK_1(X)\simeq H^0(\cN K_1).
\]

Since $X$ is singular, \autoref{thm:Fogarty's question} shows that $\Omega_X^1\longrightarrow\Omega_X^{[1]}$ is not surjective. Since $X$ has finite quotient singularities, $\Omega_X^{[1]}\simeq \cH^0(\underline{\Omega}_X^1)$, and so
\[
    \cH^0(C_{X/\mathbb{Q}}^1)\simeq\cH^0(C_X^1)\simeq\coker(\Omega_X^1\longrightarrow\Omega_X^{[1]})\neq0
\]
where the first isomorphism is by \autoref{l:C->Q}. Thus, $\cN K_1\neq0$ and since $\cN K_1$ is supported on the affine closed subvariety $Z$, we see $NK_1(X)=H^0(X,\cN K_1)=H^0(Z,\cN K_1)\neq0$. Therefore, $X$ is not $K_1$-regular.
\end{proof}

\section{$K$-regularity and projective cones:~proof of \autoref{thm:projective-finite-quotient-Km-regular}}

We prove \autoref{thm:projective-finite-quotient-Km-regular} by showing the following more general theorem. This theorem generalizes the construction in \cite[Theorem 4.3]{HaesemeyerWeibelLineBundles}, by Haesemeyer--Weibel.

\begin{theorem}\label{thm:Km-regular-G-torsor}
Let $G$ be an affine algebraic group, $\eta\colon P\to B$ a $G$-torsor over a smooth variety, and $Y$ a proper variety with $G$-action. Let $X=P\times^G Y$ with induced map $\pi\colon X\to B$, and let $\cV_P(D_Y^{p,j})$ be the vector bundle on $B$ associated to the $G$-representation $D_Y^{p,j}:=H^j(Y,C_Y^p)$.

If every $H^i(\Omega_B^q\otimes\cV_P(D_Y^{p,j}))=0$, then every $H^k(C_X^r)=0$. In particular, if $X$ is projective, then $X$ is $K_m$-regular for all $m$.
\end{theorem}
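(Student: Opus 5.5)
Throughout, $\pi\colon X=P\times^G Y\to B$ is a fibre bundle with fibre $Y$, locally trivial in the \'etale (or fppf) topology. The strategy is to relate $C_X^r$ to $C_Y^p$ fibrewise, push forward along $\pi$, and then use the vanishing hypothesis on $B$.

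\textbf{Step 1 (local product structure).} I will first work \'etale locally on $B$, over an open $U\subset B$ where $P|_U\simeq U\times G$, so that $X|_U\simeq U\times Y$. Both functors $L^r$ and $\underline{\Omega}^r$ satisfy a K\"unneth formula for products with smooth $U$:
\[
L^r_{U\times Y}\simeq\bigoplus_{q+p=r}\Omega_U^q\boxtimes L_Y^p,\qquad \underline{\Omega}^r_{U\times Y}\simeq\bigoplus_{q+p=r}\Omega_U^q\boxtimes \underline{\Omega}_Y^p .
\]
For the cotangent complex this comes from $L_{U\times Y}=\mathrm{pr}_1^*\Omega_U\oplus \mathrm{pr}_2^*L_Y$ together with derived exterior powers. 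For Du Bois complexes it is the known K\"unneth property, or one can use cdh descent: $U\times(\text{hyperresolution of }Y)$ is a hyperresolution of $U\times Y$. Taking cones then gives
\[
C^r_{U\times Y}\simeq\bigoplus_{q+p=r}\Omega_U^q\boxtimes C^p_Y .
\]

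\textbf{Step 2 (globalize).} Globally the same decomposition is only a filtration, not a splitting. Using $\pi^*\Omega_B\to L_{X}\to L_{X/B}$, the relative version for $\underline{\Omega}$, and the Koszul filtrations on exterior powers, I will obtain a finite filtration on $C_X^r$ with graded pieces $\pi^*\Omega_B^q\otimes C^{p}_{X/B}$, where $p=r-q$. Here $C^p_{X/B}$ is the relative cone, which restricts on each fibre to $C^p_Y$. The projection formula then gives
\[
R\pi_*\bigl(\pi^*\Omega_B^q\otimes C^p_{X/B}\bigr)\simeq \Omega_B^q\otimes R\pi_*C^p_{X/B}.
\]
Since $Y$ is proper and the construction is $G$-equivariant, $R\pi_*C^p_{X/B}$ has cohomology sheaves $\cV_P(H^j(Y,C_Y^p))=\cV_P(D_Y^{p,j})$, by flat base change and descent along the torsor. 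I expect this step to be the main obstacle: showing that $\underline{\Omega}$ and the cones have the relative, $G$-equivariant behaviour and are compatible with descent along $P\times Y\to X$. It involves $\underline{\Omega}_{P\times Y}$, which is itself a smooth-times-$Y$ case, and a $G$-invariants or descent argument. The affine group $G$ may be disconnected or non-reductive, but descent along a torsor only needs $G$-equivariant sheaves, and $D_Y^{p,j}$ is a $G$-representation by functoriality.

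\textbf{Step 3 (conclude).} The hypothesis $H^i(\Omega^q_B\otimes\cV_P(D_Y^{p,j}))=0$ for all $i,q,p,j$ kills the hypercohomology of each $\Omega_B^q\otimes R\pi_*C^p_{X/B}$, via the spectral sequence over its cohomology sheaves; these are bounded, since $Y$ is proper of finite dimension. Hence each graded piece of the filtration on $C_X^r$ has vanishing $R\Gamma(X,-)$. By induction along the finite filtration, $H^k(X,C_X^r)=0$ for all $k$ and $r$. If $X$ is projective, the global criterion recalled in the background, \cite[Theorem~7.1]{Shen2025}, gives that all the comparison maps $H^i(X,L^p_X)\to H^i(X,\underline{\Omega}^p_X)$ are isomorphisms. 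Therefore $X$ is $K_m$-regular for every $m$.
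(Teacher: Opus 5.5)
Your outline has the same architecture as the paper's proof. You filter $C^r_X$ with graded pieces $\pi^*\Omega^q_B\otimes(\text{relative cone})^{r-q}$, apply the projection formula, and identify the higher direct images of the relative cone with $\cV_P(D_Y^{p,j})$ via flat base change along $\eta$ and descent. You then finish with the spectral sequence and \cite[Theorem~7.1]{Shen2025}.

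But the step you call the main obstacle is a genuine gap, not a routine check. You never define $\underline{\Omega}^p_{X/B}$, and no relative Du Bois complex is available in this generality. You would need one with three properties:
\begin{enumerate}
\item a map from $L^p_{X/B}$;
\item a filtration on $\underline{\Omega}^r_X$ with graded pieces $\pi^*\Omega^q_B\otimes\underline{\Omega}^{r-q}_{X/B}$, compatible with the Koszul filtration on $L^r_X$;
\item a $G$-equivariant identification of its pullback along $\rho\colon P\times Y\to X$ with $\pr_Y^*\underline{\Omega}^p_Y$.
\end{enumerate}
The filtration is the crux. If you compute $\underline{\Omega}^r_X$ from an arbitrary hyperresolution $X_\bullet\to X$, the $X_\alpha$ are smooth but need not be smooth over $B$. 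Then $\pi_\alpha^*\Omega^1_B\to\Omega^1_{X_\alpha}$ need not be injective and $\Omega^1_{X_\alpha/B}$ need not be locally free, so there is no Koszul filtration to push forward. Your Step~1 K\"unneth splittings on \'etale-local products $U\times Y$ do not glue. Descending complexes from $P\times Y$ along the torsor is also not formal at the level of triangulated categories, since it needs a coherent equivariant model of the complexes. So the hint at the end of your Step~2 does not close the gap.

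The missing idea, which is how the paper proceeds, is to take a $G$-equivariant hyperresolution $Y_\bullet\to Y$ (from functorial resolution) and set $X_\bullet:=P\times^G Y_\bullet$. This is a hyperresolution of $X$ whose components are all smooth over $B$, by descent from $P\times Y_\alpha\to P$. So $A^p_{X/B,\epsilon}:=R(\epsilon^X_\bullet)_*\Omega^p_{X_\bullet/B}$ is defined directly on $X$ and receives a map $\gamma_p$ from $L^p_{X/B}$ by adjunction. The componentwise Koszul filtrations on $\Omega^r_{X_\alpha}$ give a filtration on $\underline{\Omega}^r_X$ with graded pieces $\pi^*\Omega^q_B\otimes A^{r-q}_{X/B,\epsilon}$, compatible with the one on $L^r_X$. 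Flat base change along $\rho$ identifies $\rho^*\Cone(\gamma_p)$ with $\pr_Y^*C^p_Y$ equivariantly. Hence only the quasi-coherent sheaves $R^j\pi_*\Cone(\gamma_p)$ need to be descended along $\eta$, which is harmless. With this in place the rest of your argument goes through.

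One small correction in Step~3: the complexes are not bounded below in general. For non-lci $Y$, $L^p_Y$ has cohomology in infinitely many negative degrees, so $D_Y^{p,j}$ can be nonzero for infinitely many $j$. The spectral sequence converges instead because everything is bounded above and $R\Gamma(B,-)$ has finite cohomological dimension.
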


Since there is currently no relative Du Bois complex at the level of generality needed for \autoref{thm:Km-regular-G-torsor}, in \autoref{prop:CX-over-B-resolution-dependent} we give a technical workaround; here, $A^p_{X/B,\epsilon}$ is a resolution-dependent version of $\underline{\Omega}^p_{X/B}$.

\begin{proposition}\label{prop:CX-over-B-resolution-dependent}
With notation as in \autoref{thm:Km-regular-G-torsor}, let $\epsilon^Y_\bullet\colon Y_\bullet\to Y$ be a $G$-equivariant hyperresolution.\footnote{This exists by functorial resolution of singularities, see e.g., Proposition 3.9.1 and Theorem 3.26 of \cite{KollarResolution}.} Let $X_\bullet:=P\times^G Y_\bullet$, and $\epsilon^X_\bullet:X_\bullet\to X$ be the induced augmentation. Then

\begin{enumerate}
    \item\label{hyerpresolution} $X_\bullet\to X$ is a hyperresolution and each $\pi_\alpha\colon X_\alpha\to B$ is smooth.
    \item\label{ApXBepsilon} Letting
        \[
            A^p_{X/B,\epsilon}:=R(\epsilon^X_\bullet)_*\Omega^p_{X_\bullet/B},
        \]
        there is a natural map 
        \[
            \gamma_p\colon L^p_{X/B}\to A^p_{X/B,\epsilon}.
        \]
    \item\label{RjpiKpXB} Letting
        \[
            K^p_{X/B,\epsilon}:=\Cone(\gamma_p),
        \]
        we have
        \[
            R^j\pi_*K^p_{X/B,\epsilon}\simeq\cV_P(D_Y^{p,j}).
        \]
    \item\label{GrqCrX} $C^r_X$ admits a filtration whose associated graded is
        \[
            \Gr^q C^r_X\simeq\pi^*\Omega_B^q\otimes K^{r-q}_{X/B,\epsilon}.
        \]
\end{enumerate}
\end{proposition}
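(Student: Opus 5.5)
We prove the four parts in order; (1) and (2) are formal, (3) is a base-change computation, and (4) comes from the relative cotangent sequence.

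\textbf{Part (1).} Since $\eta\colon P\to B$ is a $G$-torsor, it is étale-locally (or fppf-locally) trivial on $B$. Over an étale cover $U\to B$ trivializing $P$, we have $X\times_B U\simeq U\times Y$ and $X_\alpha\times_B U\simeq U\times Y_\alpha$, compatibly with the augmentations. Each $Y_\alpha$ is smooth and $U$ is smooth, so $X_\alpha$ is smooth over $B$: smoothness descends along étale covers. The cubical hyperresolution conditions are cohomological descent for $R\epsilon_*$ together with properness and the discriminant conditions. These are étale-local on $B$ and stable under the flat base change $U\times(-)\to U\times Y$, so they follow from those for $Y_\bullet\to Y$.

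\textbf{Part (2).} The map $\gamma_p$ is built as for $\underline{\Omega}$. The compatible maps $X_\alpha\to X$ over $B$ give $L^p_{X/B}\to R\epsilon_{\alpha*}L^p_{X_\alpha/B}$. Since $X_\alpha\to B$ is smooth, $L^p_{X_\alpha/B}\simeq\Omega^p_{X_\alpha/B}$. These maps assemble over the cubical diagram into a map to $R(\epsilon^X_\bullet)_*\Omega^p_{X_\bullet/B}$.

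\textbf{Part (3).} Again work étale-locally on $B$, where $X=U\times Y$. Both the cotangent complex and relative differentials of the smooth $X_\alpha/B$ are compatible with flat base change, so on $U\times Y$ we get $L^p_{X/B}\simeq\mathrm{pr}_Y^*L^p_Y$ and $A^p_{X/B,\epsilon}\simeq \mathrm{pr}_Y^*\underline{\Omega}^p_Y$. Hence $K^p_{X/B,\epsilon}\simeq \mathrm{pr}_Y^*C^p_Y$. Note that $\mathrm{pr}_Y^*\underline\Omega^p_Y$ is computed with the chosen hyperresolution, which is legitimate since the Du Bois complex is independent of that choice. Flat base change for the proper map $\mathrm{pr}_U$ then gives
\[
R^j\pi_*K^p\simeq \cO_U\otimes H^j(Y,C^p_Y).
\]
These local identifications must glue. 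On overlaps the transition is given by the $G$-cocycle of $P$ acting through the $G$-equivariance of $Y_\bullet$, and hence of $C^p_Y$ and of $D^{p,j}_Y$. This is exactly the descent datum defining $\cV_P(D_Y^{p,j})$. To make the argument rigorous I would phrase it via the equivariant identification $\eta^*R^j\pi_*K^p\simeq \cO_P\otimes D^{p,j}_Y$ on $P$, which holds because $P\times_B X\simeq P\times Y$. One point needs care: the $G$-action on $D^{p,j}_Y$ must be algebraic, which is where $G$ being affine and $Y$ proper enter (finite-dimensionality of coherent cohomology).

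\textbf{Part (4), the main obstacle.} Start with the transitivity triangle $\pi^*L_B\to L_X\to L_{X/B}$, where $L_B=\Omega^1_B$ is locally free because $B$ is smooth. Taking derived exterior powers gives a finite filtration on $L^r_X$ with
\[
\Gr^q\simeq\pi^*\Omega^q_B\otimes L^{r-q}_{X/B}.
\]
Similarly, for each $\alpha$ the sequence $0\to\pi_\alpha^*\Omega^1_B\to\Omega^1_{X_\alpha}\to\Omega^1_{X_\alpha/B}\to0$ is exact with locally free terms, since $\pi_\alpha$ is smooth. This gives a filtration on $\Omega^r_{X_\alpha}$ with graded pieces $\pi_\alpha^*\Omega^q_B\otimes\Omega^{r-q}_{X_\alpha/B}$. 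Applying $R\epsilon_*$ and the projection formula (valid since $\Omega^q_B$ is locally free) yields a filtration on $\underline{\Omega}^r_X$ with
\[
\Gr^q\simeq\pi^*\Omega^q_B\otimes A^{r-q}_{X/B,\epsilon}.
\]
The difficulty is making these filtrations live in a filtered derived category compatibly with the comparison map. Otherwise one cannot take the cone filtration-wise. I would handle this by working on the level of simplicial resolutions: choose a simplicial polynomial resolution of $\cO_X$ over $\cO_B$, so that the Koszul-type filtration is strict. Then check that $\gamma$ and the comparison map $L^r_X\to\underline\Omega^r_X$ are filtered, and that the induced maps on $\Gr^q$ are $\id\otimes\gamma_{r-q}$. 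Taking cones of filtered maps then gives the claimed filtration of $C^r_X$ with $\Gr^q C^r_X\simeq \pi^*\Omega^q_B\otimes K^{r-q}_{X/B,\epsilon}$.
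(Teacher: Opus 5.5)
Your proposal is correct and follows essentially the same route as the paper: descent along the torsor, flat base change giving a $G$-equivariant identification $\rho^*K^p_{X/B,\epsilon}\simeq\pr_Y^*C^p_Y$, and filtrations on $L^r_X$ and $\underline{\Omega}^r_X$ coming from the transitivity triangle and the sequences $0\to\pi_\alpha^*\Omega^1_B\to\Omega^1_{X_\alpha}\to\Omega^1_{X_\alpha/B}\to0$. The paper pulls back directly along $\eta\colon P\to B$ using $P\times_B X\simeq P\times Y$, which is the rigorous version you describe. For the filtered structure in part (4), it cites Illusie (Chapter V, Proposition 4.2.5) and the compatibility of the comparison maps with these sequences, which your simplicial polynomial resolution over $\cO_B$ would make explicit.
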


\begin{proof}
    We have a cartesian diagram
    \[
    \xymatrix{
        Y_\bullet\ar[d]_-{\epsilon^Y_\bullet} & P\times Y_\bullet\ar[d]_-{\id_P\times\epsilon^Y_\bullet}\ar[r]\ar[l]_-{\pr_{Y_\bullet}} & X_\bullet\ar[d]^-{\epsilon^X_\bullet}\\
        Y\ar[d] & P\times Y\ar[d]_-{\pr_P}\ar[l]_-{\pr_Y}\ar[r]^-{\rho} & X\ar[d]^-{\pi}\\
        \Spec\CC & P\ar[r]^-{\eta}\ar[l] & B
    }
    \]
    Since $\epsilon^Y_\bullet$ is a hyperresolution, $\id_P\times\epsilon^Y_\bullet$ is as well. Since $\eta$ is a smooth cover, and properness, surjectivity, and smoothness are smooth local properties, it follows that $\epsilon^X_\bullet$ is a hyperresolution. Furthermore, since $Y_\alpha$ is smooth, $P\times Y_\alpha\to P$ is smooth; then by descent, $\pi_\alpha\colon X_\alpha\to B$ is smooth as well. This proves (\ref{hyerpresolution}).

    We have a canonical map $L(\epsilon^X_\alpha)^*L^p_{X/B}\to L^p_{X_\alpha/B}=\Omega^p_{X_\alpha/B}$. By adjunction, this gives a canonical map
    \[
        \gamma_{p,\alpha}\colon L^p_{X/B}\to R(\epsilon^X_\alpha)_*\Omega^p_{X_\alpha/B}.
    \]
    Since this is functorial in $\alpha$, it is compatible with all face and degeneracy maps of the hyperresolution $\epsilon^X_\bullet$. Thus, we obtain our desired map
    \[
        \gamma_p\colon L^p_{X/B}\to A^p_{X/B,\epsilon}
    \]
    from (\ref{ApXBepsilon}).

    By flat base change, we have a commutative diagram
    \[
    \xymatrix{
        \rho^*L^p_{X/B}\ar[r]^-{\rho^*\gamma_{p,\alpha}}\ar[d]_-{\simeq} & \rho^*R(\epsilon^X_\alpha)_*\Omega^p_{X_\alpha/B}\ar[d]^-{\simeq}\\
        L^p_{(P\times Y)/P}\ar[r] & R(\id_P\times\epsilon^Y_\alpha)_*\Omega^p_{(P\times Y_\alpha)/P}\\
        \pr_Y^*L^p_Y\ar[u]^-{\simeq}\ar[r] & \pr_Y^*R(\epsilon^Y_\alpha)_*\Omega^p_{Y_\alpha}\ar[u]_-{\simeq}
    }
    \]
    As a result, we have isomorphisms
    \[
        \rho^*K^p_{X/B,\epsilon}\xrightarrow{\simeq} \Cone\left(L^p_{(P\times Y)/P}\to R(\id_P\times\epsilon^Y_\bullet)_*\Omega^p_{(P\times Y_\bullet)/P}\right) \xleftarrow{\simeq} \pr_Y^*C^p_Y.
    \]
    Furthermore, note that all maps in our cartesian diagram are $G$-equivariant, and hence all of the above isomorphisms in our commutative diagrams are $G$-equivariant as well. By flat base change, 
    \[
        \eta^*R^j\pi_*K^p_{X/B,\epsilon}\simeq R^j(\pr_P)_*\pr_Y^*C^p_Y\simeq \cO_P\otimes_\CC D_Y^{p,j}.
    \]
    Since this is a $G$-equivariant isomorphism, we see $R^j\pi_*K^p_{X/B,\epsilon}\simeq\cV_P(D_Y^{p,j})$, proving (\ref{RjpiKpXB}).

    Lastly, we turn to the proof of (\ref{GrqCrX}). We have an exact triangle
    \[
        \pi^*\Omega^1_B\to L_X\to L_{X/B}
    \]
    which makes $L_X$ an object of the filtered derived category. Then by \cite[Chapter V Proposition 4.2.5]{IllusieCotangentI}, we have a filtration $F^\bullet$ on $L^r_X$ whose associated graded is $\Gr^q_F L^r_X=\pi^*\Omega^q_B\otimes L^{r-q}_{X/B}$. Similarly, from the exact sequence
    \[
        0\to \pi_\alpha^*\Omega^1_B\to \Omega^1_{X_\alpha}\to \Omega^1_{X_\alpha/B}\to 0,
    \]
    we obtain a filtration $G_\alpha$ on $\Omega^r_{X_\alpha}$ whose associated graded is $\Gr^q_{G_\alpha}\Omega^r_{X_\alpha}=\pi_\alpha^*\Omega^q_B\otimes\Omega^{r-q}_{X_\alpha/B}$. Note that if $\delta\colon X_\alpha\to X_\beta$ is a face or degeneracy map of our simplicial resolution, then $\delta^*\Omega^r_{X_\beta/B}\to \Omega^r_{X_\alpha/B}$ yields a map $\delta^*G_\beta\to G_\alpha$. Hence, we obtain a filtration $G$ on $\underline{\Omega}^r_X$. From the projection formula and the fact that $\pi_\alpha=\pi\epsilon^X_\alpha$, we see
    \[
        \Gr^q_G \underline{\Omega}^r_X =R(\epsilon^X_\bullet)_*(\pi_\bullet^*\Omega^q_B\otimes\Omega^{r-q}_{X_\bullet/B}) 
        =\pi^*\Omega^q_B\otimes R(\epsilon^X_\bullet)_*\Omega^{r-q}_{X_\bullet/B}
        =\pi^*\Omega^q_B\otimes A^{r-q}_{X/B,\epsilon}.
    \]
    Note that we have a morphism of exact triangles
    \[
        \xymatrix{
        (\epsilon^X_\alpha)^*\pi^*\Omega^1_B\ar[r]\ar[d]_-{\simeq} & L(\epsilon^X_\alpha)^*L_X\ar[r]\ar[d] & L(\epsilon^X_\alpha)^*L_{X/B}\ar[d]\\
        \pi_\alpha^*\Omega^1_B\ar[r] & \Omega^1_{X_\alpha}\ar[r] & \Omega^1_{X_\alpha/B}
        }
    \]
    Thus, the canonical map $L^r_X\to\underline{\Omega}^r_X$ respects the filtrations $F$ and $G$, and so we obtain an induced filtration on $C^r_X$ with
    \[
        \Gr^q C^r_X\simeq\Cone\left(\pi^*\Omega^q_B\otimes L^{r-q}_{X/B}\xrightarrow{\id\otimes\gamma_{r-q}} \pi^*\Omega^q_B\otimes A^{r-q}_{X/B,\epsilon}\right)\simeq \pi^*\Omega^q_B\otimes K^{r-q}_{X/B,\epsilon},
    \]
    which shows (\ref{GrqCrX}).
\end{proof}

\begin{proof}[{Proof of \autoref{thm:Km-regular-G-torsor}}]
    By \autoref{prop:CX-over-B-resolution-dependent}(\ref{GrqCrX}), we know
    \[
        \Gr^q C^r_X=\pi^*\Omega^q_B\otimes K^{r-q}_{X/B,\epsilon}
    \]
    and to prove $R\Gamma(C^r_X)=0$, it suffices to show vanishing of $R\Gamma(\pi^*\Omega^q_B\otimes K^{r-q}_{X/B,\epsilon})=R\Gamma(\Omega^q_B\otimes R\pi_*K^{r-q}_{X/B,\epsilon})$. Then from the spectral sequence
    \[
        E_2^{ij}=H^i(\Omega^q_B\otimes R^j\pi_*K^{r-q}_{X/B,\epsilon})\Rightarrow H^{i+j}(\Omega^q_B\otimes R\pi_*K^{r-q}_{X/B,\epsilon}),
    \]
    it suffices to show $H^i(\Omega^q_B\otimes R^j\pi_*K^{r-q}_{X/B,\epsilon})=0$. This follows from \autoref{prop:CX-over-B-resolution-dependent}(\ref{RjpiKpXB}). Thus, we have shown $R\Gamma(C^r_{X/\CC})=0$. If $X$ is projective, \cite[Theorem~7.1]{Shen2025} therefore shows that $X$ is $K_m$-regular for all $m$.
\end{proof}

We next show the following application giving a class of examples which are $K_m$-regular for all $m$.

\begin{corollary}\label{cor:Km-regular-cone}
    Let $Y'\subset\bP(V)$ be a smooth connected projective variety, and let $Y\subset\bP(V\oplus\CC)$ be its projective cone. Let $e\in\ZZ^+$ and let $\bG_m$ act on $Y$ by $\lambda(v:c)=(v:\lambda^e c)$. If $A$ is an abelian variety and $\cL\in\Pic^0(A)$ is a non-torsion line bundle with associated $\bG_m$-torsor $P\to A$, then $X=P\times^{\bG_m}Y$ is projective and $K_m$-regular for all $m$.
\end{corollary}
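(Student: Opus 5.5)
The plan is to apply \autoref{thm:Km-regular-G-torsor} with $G=\bG_m$, $B=A$, and $P\to A$ the $\bG_m$-torsor of $\cL$. Then the statement reduces to three things: projectivity of $X$, and the cohomology vanishing
\[
H^i\bigl(A,\Omega_A^q\otimes\cV_P(D_Y^{p,j})\bigr)=0 \quad\text{for all } i,q,p,j.
\]
For projectivity, note that $X=P\times^{\bG_m}Y$ sits inside $P\times^{\bG_m}\bP(V\oplus\CC)=\bP_A(V\otimes\cO_A\oplus\cL^{\pm e})$ as a closed subvariety. A projectivized vector bundle over the projective variety $A$ is projective, so $X$ is projective.

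For the vanishing, I use that $\Omega^q_A\simeq\cO_A^{\oplus\binom{g}{q}}$ is trivial, since $A$ is an abelian variety. Next, $D_Y^{p,j}=H^j(Y,C^p_Y)$ is a finite-dimensional $\bG_m$-representation; finite-dimensionality holds because $Y$ is proper and $C^p_Y$ has coherent cohomology. It therefore splits into weight spaces $\bigoplus_w \CC_{(w)}^{\oplus n_w}$, and correspondingly $\cV_P(D_Y^{p,j})\simeq\bigoplus_w (\cL^{\otimes w})^{\oplus n_w}$, up to a sign convention on $w$. By Mumford's computation, every nontrivial $\cM\in\Pic^0(A)$ has $H^i(A,\cM)=0$ for all $i$. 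Since $\cL$ is non-torsion, $\cL^{\otimes w}$ is nontrivial for every $w\neq0$. So the required vanishing holds exactly when the weight-zero part of each $D_Y^{p,j}$ vanishes, that is, when
\[
\bigl(D_Y^{p,j}\bigr)^{\bG_m}=H^j(Y,C^p_Y)^{\bG_m}=0 \quad\text{for all } p,j.
\]

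The main obstacle is this last vanishing, which is a statement about the projective cone $Y$ alone. I would compute it as follows.
\begin{itemize}
\item Away from the vertex, $Y$ is a $\bP^1$-bundle over $Y'$, hence smooth, so $C^p_Y$ is supported at the vertex $o=(0:1)$. Thus $H^j(Y,C^p_Y)=\cH^j(C^p_Y)_o$, which can be computed on the affine cone $\widehat Y=\Spec R$, $R=\bigoplus_k H^0(Y',\cO(k))$ modulo the normalization issue below.
\item The $\bG_m$-action $\lambda\cdot(v:c)=(v:\lambda^e c)$ restricted to the affine chart $c=1$ is $v\mapsto\lambda^{-e}v$. So up to sign it is the scaling action with weights $e$ times the grading of $R$.
\item Hence $\bG_m$-invariants pick out the degree-zero graded piece of $\cH^j(C^p_{\widehat Y})$, where $L^p$ and $\underline{\Omega}^p$ are graded $R$-modules or complexes.
\item The key claim is that the degree-$0$ parts of $L^p_R$ and $\underline{\Omega}^p_R$ agree. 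Both vanish in degree $0$ for $p\ge1$, because $\Omega^1_R$ is generated by $dv$'s of positive weight and $\underline{\Omega}^p$ of a cone is built from $\Omega^p$ of the blow-up and of $Y'$, twisted by positive weights. In degree $p=0$ the cone is just $\CC$ in weight $0$, mapping isomorphically to the weight-$0$ part of $\underline{\Omega}^0$.
\end{itemize}

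I would verify the key claim using the standard description of the Du Bois complex of a cone: $\underline\Omega^p_{\widehat Y}$ is computed by $R\Gamma$ of $\bigoplus_{k}\Omega^p$-type sheaves on the total space of $\cO_{Y'}(-1)$ minus the exceptional divisor, with grading by $k$. In this description the weight-$0$ part in positive form degree is zero, or cancels against $Y'$.

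If $Y$ is not normal, because $R$ differs from the homogeneous coordinate ring of $Y'\subset\bP(V)$, I would handle it by noting that $L_R$ depends only on $R$ and is still positively graded in positive form degree. Again only weight $0$ matters, and $R_0=\CC$.

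Combining the three steps, all $H^i(\Omega^q_A\otimes\cV_P(D^{p,j}_Y))$ vanish. \autoref{thm:Km-regular-G-torsor} then gives that $X$ is $K_m$-regular for all $m$.
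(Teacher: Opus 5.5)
Your proposal matches the paper's proof essentially step for step. The shared steps are:
\begin{itemize}
\item projectivity via the closed immersion into $\bP_A(P\times^{\bG_m}(V\oplus\CC))$;
\item triviality of $\Omega^q_A$ plus Mumford's vanishing for nontrivial $\cL^{\otimes w}$, reducing everything to the vanishing of the weight-zero part of $D^{p,j}_Y$;
\item localization at the vertex to the affine cone;
\item positivity of weights in a graded simplicial polynomial resolution for $L^p$;
\item the blow-up triangle for $\underline\Omega^p$.
\end{itemize}
The one step to tighten is the Du Bois computation. For $g\colon\widetilde Z\to Y'$ the blow-up of the cone at the vertex (the total space of $\cO_{Y'}(-1)$), the weight-zero part of $g_*\Omega^p_{\widetilde Z}$ is not zero but $\Omega^p_{Y'}$, by $0\to g^*\Omega^p_{Y'}\to\Omega^p_{\widetilde Z}\to g^*\Omega^{p-1}_{Y'}(1)\to0$. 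So for $p>0$ the vanishing of the weight-zero part of $R\Gamma(\underline\Omega^p)$ does not come from positivity of weights, nor from working on the punctured cone, which has nonzero weight-zero forms. It comes from restriction along the zero section $Y'\hookrightarrow\widetilde Z$ identifying this piece isomorphically with the $Y'$-term of the blow-up triangle. That is the precise form of your ``cancels against $Y'$''.
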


\begin{proof}
    Since $Y\subset\bP(V\oplus\CC)$ is a $\bG_m$-equivariant closed immersion, we have a closed immersion $X=P\times^{\bG_m}Y\subset\bP_A(P\times^{\bG_m}(V\oplus\CC))$. Thus, $X$ is projective.

    Let $D^{p,j}_{Y,w}$ denote the weight $w$ piece of $D^{p,j}_Y$. Then $\cV_P(D^{p,j}_Y)=\bigoplus_w(D^{p,j}_{Y,w}\otimes_\CC\cL^{\otimes w})$. Since $\Omega^q_A$ is trivial, by \autoref{thm:Km-regular-G-torsor}, we need only show $D^{p,j}_{Y,w}\otimes_\CC H^i(\cL^{\otimes w})=H^i(D^{p,j}_{Y,w}\otimes_\CC\cL^{\otimes w})=0$ for all $i$. Note that if $w\neq0$, then $\cL^{\otimes w}$ is a non-trivial degree $0$ line bundle, so \cite[p.~76 (vii)]{MumfordAbelianVarieties} tells us $H^i(\cL^{\otimes w})=0$.

    To finish the proof, we therefore need only show $D^{p,j}_{Y,0}=0$. Note that $C^p_Y$ is supported on the vertex $v$ of the cone, so we may restrict attention to the affine cone $Z=\Spec R$ over $Y'$. We may write $R=\bigoplus_{n\geq0}R_n$, where $R_0=\CC$ and $R$ has only positive weights coming from the $\bG_m$-action. Let $P_\bullet\to R$ be a graded simplicial polynomial resolution. Since $R_0=\CC$, we may choose each $P_n=\CC[x_{n,\alpha}]$ with $x_{n,\alpha}$ of positive weight. Then every homogeneous element of $\Omega^1_{P_n}\otimes_{P_n}R\simeq\bigoplus_\alpha R dx_{n,\alpha}$ has positive weight, and so $(L^p_Z)_0=0$ for $p>0$. For $p=0$, we have canonical isomorphisms $(L^0_Z)_0\simeq R_0=\CC$. 

    Next consider the Du Bois complex $\underline{\Omega}^p_Z$. Let $f\colon\widetilde{Z}\to Z$ be the blow-up of $Z$ at $v$, and let $g\colon\widetilde{Z}\to E=Y'$ be the projection to the exceptional divisor. Note that $\widetilde{Z}$ is smooth. Consider the short exact sequence
    \[
        0\to g^*\Omega^1_{Y'}\to\Omega^1_{\widetilde{Z}}\to\Omega^1_{\widetilde{Z}/Y'}\to0.
    \]
    Since $\Omega^1_{\widetilde{Z}/Y'}=g^*\cO(1)$, taking exterior powers we have another short exact sequence given by
    \[
        0\to g^*\Omega^p_{Y'}\to\Omega^p_{\widetilde{Z}}\to g^*\Omega^{p-1}_{Y'}(1)\to0.
    \]
    Since $\widetilde{Z}=\underline{\Spec}_{Y'}\Sym\cO(1)$, we see $g_*\cO_{\widetilde{Z}}=\bigoplus_{n\geq0}\cO(n)$, and so
    \[
        0\to \bigoplus_{n\geq0}\Omega^p_{Y'}(n)\to g_*\Omega^p_{\widetilde{Z}}\to \bigoplus_{n\geq0}\Omega^{p-1}_{Y'}(n+1)\to0
    \]
    is exact. We therefore have an isomorphism on weight $0$ pieces
    \[
        \beta\colon\Omega^p_{Y',0}\xrightarrow{\simeq} g_*\Omega^p_{\widetilde{Z},0}.
    \]
    Next, since $f$ is an equivariant map, we have an equivariant exact triangle
    \[
        R\Gamma(\underline{\Omega}^p_Z)\to R\Gamma(\underline{\Omega}^p_{\widetilde{Z}})\oplus\Omega^p_v\to R\Gamma(\underline{\Omega}^p_{Y'}).
    \]
    When $p>0$, by considering the weight zero piece, this becomes an exact triangle
    \[
        R\Gamma(\underline{\Omega}^p_Z)_0\to R\Gamma(\underline{\Omega}^p_{Y'})\xrightarrow{\gamma} R\Gamma(\underline{\Omega}^p_{Y'}).
    \]
    The map $\gamma$ is induced by inclusion $\iota\colon Y'\to\widetilde{Z}$. Since $g\iota=\id_{Y'}$, we see $\gamma$ is an isomorphism, namely the inverse of $\beta$. Thus, 
    \[
        R\Gamma(\underline{\Omega}^p_Z)_0=0.
    \]
    When $p=0$, the exact triangle yields
    \[
        R\Gamma(\underline{\Omega}^0_Z)_0\to R\Gamma(\cO_{Y'})\oplus\CC\xrightarrow{\delta} R\Gamma(\cO_{Y'})
    \]
    where $\delta(y',c)=y'-c$. Thus, $R\Gamma(\underline{\Omega}^0_Z)_0\simeq\CC$ and we see $R\Gamma(C^p_Z)_0=0$ as desired.
\end{proof}

Finally we turn to \autoref{thm:projective-finite-quotient-Km-regular}.

\begin{proof}[{Proof of \autoref{thm:projective-finite-quotient-Km-regular}}]
Let $Y'=\nu_d(\bP^1)\subset\bP^d$ be the $d$-th Veronese embedding, and let $Y$ be the projective cone over $Y'$. Then $Y=\bP(1,1,d)\simeq\bP^2/\mu_d$. 

Let $P\to A$ be the $\bG_m$-torsor associated to $\cL$. Note that if $\bG_m$ acts on $\bA^3$ with weights $(0,0,1)$, then $Z:=\bP_A(\cO^{\oplus2}\oplus\cL)=\bP_A(P\times^{\bG_m}\bA^3)=P\times^{\bG_m}\bP^2$. Then we see the $\mu_d$-action on $Z$ lifts to an action on $P\times\bA^3$ where it acts via the $\bG_m$-action on $\bA^3$ through the inclusion $\mu_d\subset\bG_m$. Since the $\bG_m$-action on $\bA^3$ and this $\mu_d$-action commute, we see
\[
    Z/\mu_d=((P\times\bP^2)/\bG_m)/\mu_d=P\times^{\bG_m}(\bP^2/\mu_d).
\]
Thus, by \autoref{cor:Km-regular-cone}, we see $Z/\mu_d$ is $K_m$-regular for all $m$. Furthermore, on an open affine cover $U_i$ of $A$ where $\cL$ is trivial, we see $Z/\mu_d$ is given by $U_i\times(\bP^2/\mu_d)=U_i\times\bP(1,1,d)$. Since $d\geq3$ this is not lci.
\end{proof}

\bibliographystyle{plain}
\bibliography{references}

\end{document}